\documentclass{amsart}
\usepackage{amssymb,amsmath,latexsym,amsfonts,amsthm,mathrsfs, lineno}
\usepackage{enumerate}
\pagestyle{plain}

\newtheorem{thm}{Theorem}
\newtheorem{thmm}{Theorem}
\newtheorem{defn}{Definition}
\newtheorem{prop}{Proposition}

\newtheorem{question}{Question}

\newcommand{\N}{\mathbb{N}}
\newcommand{\Z}{\mathbb{Z}}
\newcommand{\R}{\mathbb{R}}
\newcommand{\Q}{\mathbb{Q}}

\newcommand{\Ur}{\textbf{U}}
\newcommand{\K}{\mathcal{K}}
\newcommand{\funct}[2]{#1 \longrightarrow #2}
\newcommand{\m}[1]{\textbf{#1}}
\newcommand{\mc}[1]{\widetilde{\textbf{#1}}}
\newcommand{\Aut}{\mathrm{Aut}}
\newcommand{\Stab}{\mathrm{Stab}}
\newcommand{\Age}{\mathrm{Age}}
\newcommand{\Emb}{\mathrm{Emb}}

\newcommand{\arrows}[3]{\longrightarrow {#1}^{#2}_{#3}}
\newcommand{\restrict}[2]{#1\mathbin{\upharpoonright} #2}

\author{L. Nguyen Van Th\'e}

\address{Laboratoire d'Analyse, Topologie et Probabilit\'es, Universit\'e d'Aix-Marseille, Centre de Math\'ematiques et Informatique, 
39, rue F. Joliot Curie, 13453 Marseille Cedex 13, France}

\email{lionel@latp.univ-mrs.fr}

\title{Universal flows of closed subgroups of $S_{\infty}$ and relative extreme amenability}

\subjclass[2010]{Primary: 37B05; Secondary: 03C15 03E02 03E15 05D10 22F50 43A07 54H20}
\keywords{Extreme amenability, relative extreme amenability, relative Ramsey property, Fra\"iss\'e theory, Ramsey theory, universal flow}

\date{Submitted in March 2012, accepted in July 2012}

\begin{document}

\begin{abstract}

This paper is devoted to the study of universality for a particular continuous action naturally attached to certain pairs of closed subgroups of $S_{\infty}$. It shows that three new concepts, respectively called relative extreme amenability, relative Ramsey property for embeddings and relative Ramsey property for structures, are relevant in order to understand this property correctly. It also allows us to provide a partial answer to a question posed in \cite{KPT} by Kechris, Pestov and Todorcevic.  

\end{abstract}

\maketitle

\section{Introduction}

This note builds on the paper \cite{KPT} by Kechris, Pestov and Todorcevic, and is devoted to the study of universality for a particular continuous action naturally attached to certain pairs of closed subgroups of $S_{\infty}$. Recall that if $G$ is a topological group, a \emph{$G$-flow} is a continuous action of $G$ on a topological space $X$ (in what follows, all topological spaces will be Hausdorff). For those, we will often use the notation $G \curvearrowright X$. The flow $G \curvearrowright X$ is \emph{compact} when the space $X$ is. It is \emph{universal} when every compact minimal $G \curvearrowright Y$ is a factor of $G \curvearrowright X$, which means that there exists $\pi : X \longrightarrow Y$ continuous, onto and $G$-equivariant, i.e.  so that $$\forall g \in G \quad \forall x \in X \quad  \pi (g \cdot x) = g \cdot \pi(x).$$ 

Finally, it is \emph{minimal} when every $x \in X$ has dense orbit in $X$: \[ \forall x \in X \  \  \overline{G\cdot x} = X\] 

It turns out that when $G$ is Hausdorff, there is, up to isomorphism of $G$-flows, a unique $G$-flow that is both minimal and universal. This flow is called \emph{the universal minimal flow} of $G$ and is denoted by  $G \curvearrowright M(G)$. When studying universal minimal flows of closed subgroups of $S_{\infty}$ (here and throughout the paper, $S_{\infty}$ denotes the symmetric group of the natural numbers $\N$, equipped with the pointwise convergence topology), the authors of \cite{KPT} showed that certain flows encode remarkable combinatorial properties, called the Ramsey property and the ordering property. This connection also takes place in a slightly broader context and it is in this more general framework that we will present it here, where pure order expansions (i.e.  order expansions where the language is enriched with a single binary relation symbol, which is always interpreted as a linear ordering) are replaced by precompact relational expansions and where the ordering property is replaced by the expansion property (see \cite{NVT3}). Given closed subgroups $G^{*}\leq  G$ of $S_{\infty}$, there are natural classes $\K$ and $\K^{*}$ of finite objects attached to them, as well as a $G$-flow $G\curvearrowright X^{*}$, where $X^{*}$ is the completion of the quotient $G/G^{*}$ equipped with the projection of the right-invariant metric. When $X^{*}$ is compact, the following properties hold (cf \cite{KPT} for pure order expansions and \cite{NVT3} for the precompact case): 

\begin{enumerate}

\item[i)] The flow $G\curvearrowright X^{*}$ is universal and minimal iff $\K^{*}$ has the Ramsey property and the expansion property relative to $\K$. 

\item[ii)] The flow $G\curvearrowright X^{*}$ is minimal iff $\K^{*}$ has the expansion property relative to $\K$.  

\end{enumerate}

\begin{question}[\cite{KPT}, p.174]

Assume that $\K^{*}$ is a pure order expansion of $\K$. Is universality of $G\curvearrowright X^{*}$ equivalent to the Ramsey property for $\K^{*}$? 

\end{question}

More generally: 

\begin{question}
Assume that $\K^{*}$ is a precompact expansion of $\K$. Is universality of $G\curvearrowright X^{*}$ equivalent to the Ramsey property for $\K^{*}$? 

\end{question}

Apart from the fact that they are very natural in view of the previous results, there is, at least, one other very good reason to study those problems. Indeed, given a class of finite objects, the Ramsey property is very often difficult to prove. Any new tool that would allow to reach it is therefore welcome. 
However, \cite{KPT} already provides a spectacular dynamical characterization of the Ramsey property for $\K ^{*}$, as it shows that it is equivalent to $M(G^{*})$ being reduced to a single point for some subgroup $G^{*}$ of $G$ naturally attached to $\K^{*}$. So what is the real interest of the questions? We started working on them when we realized that the Ramsey property implies universality of $G\curvearrowright X^{*}$. Our hope became then that universality would be equivalent to a strictly weaker combinatorial condition for $\K^{*}$, call it weak Ramsey property for the moment, and that this condition would be, in practice, easier to prove than the Ramsey property. This fact, together with the observation that the expansion property is often much easier to prove than the Ramsey property, would then give an alternate combinatorial approach for further problems involving Ramsey property: in order to prove it for $\K^{*}$, instead of attacking the problem directly, it would suffice to prove that both the weak Ramsey property and the expansion property are satisfied. Indeed, that would prove that the flow $G\curvearrowright X^{*}$ is both universal and minimal, and therefore that $\K^{*}$ has the Ramsey property. So far, this approach is partially successful. It is successful in the sense that universality is indeed equivalent to some combinatorial condition (finally not called weak Ramsey property but, for reasons that should become clear later on, relative Ramsey property) which is, in general, strictly weaker than the Ramsey property. But the success is only partial in the sense that it is unclear whether in practice, this condition is much easier to prove than the Ramsey property.  

Let us now turn to a concise description of the results presented in this paper. Let $L$ be a countable first order signature and $L^{*} = L \cup \{ R_{i} : i\in I^{*}\}$ a countable relational expansion of $L$.  We refer to Section \ref{section:basics} for all other undefined notions. Our first result connects universality of $G\curvearrowright X^{*}$ to a dynamical statement involving actions of $G$ and $G^{*}$.  

\begin{defn}

\label{defn:rea}

Let $H\leq G$ be topological groups. Say that the pair $(G, H)$ is \emph{relatively extremely amenable} when every continuous action of $G$ on every compact space admits an $H$-fixed point. 

\end{defn}

\begin{thm}

\label{thm:univREA}

Let $\m F$ be a Fra\"iss\'e structure in $L$ and $\m F^{*}$ a Fra\"iss\'e precompact relational expansion of $\m F$ in $L^{*}$. The following are equivalent: 

\begin{enumerate}

\item[i)] The flow $G\curvearrowright X^{*}$ is universal.  

\item[ii)] The pair $(G, G^{*})$ is relatively extremely amenable. 

\end{enumerate}

\end{thm}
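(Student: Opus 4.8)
The plan is to prove both implications by relating the universal flow $G\curvearrowright X^*$ to the dynamics of the subgroup $G^*$. The key structural fact I would rely on is the explicit description of $X^*$ as the completion of $G/G^*$ under the projected right-invariant metric, together with the observation that $G^*$ is (up to the relevant identifications) the stabilizer of a distinguished point $x_0 \in X^*$ — namely the image of the identity coset. This point has the crucial property that its $G$-orbit is dense in $X^*$, and its stabilizer is exactly $G^*$ (more precisely, the isotropy is controlled by $G^*$). Establishing this correspondence cleanly is the necessary first step before either direction becomes available.

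\emph{From universality to relative extreme amenability (i) $\Rightarrow$ (ii).} Assume $G\curvearrowright X^*$ is universal, and let $G\curvearrowright Y$ be an arbitrary $G$-flow on a compact space $Y$; I must produce a $G^*$-fixed point in $Y$. First I would pass to a minimal subflow $G\curvearrowright Y_0 \subseteq Y$ (which exists by compactness via Zorn's lemma applied to closed invariant subsets), since a $G^*$-fixed point of $Y_0$ is a fortiori one of $Y$. By universality there is a continuous $G$-equivariant surjection $\pi : X^* \to Y_0$. The plan is then to evaluate $\pi$ at the distinguished point $x_0$: I claim $\pi(x_0)$ is fixed by $G^*$. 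Indeed, for $g \in G^*$ we have $g\cdot x_0 = x_0$ (as $G^*$ stabilizes $x_0$), so by equivariance $g\cdot \pi(x_0) = \pi(g\cdot x_0) = \pi(x_0)$. Thus $\pi(x_0) \in Y_0 \subseteq Y$ is the desired $G^*$-fixed point.

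\emph{From relative extreme amenability to universality (ii) $\Rightarrow$ (i).} Assume $(G,G^*)$ is relatively extremely amenable, and let $G\curvearrowright Y$ be a compact minimal flow; I must factor it through $X^*$. By hypothesis there is a point $y_0 \in Y$ fixed by $G^*$. The plan is to build the factor map by transporting the orbit map of $y_0$ through the quotient structure of $X^*$. Concretely, I would define a map on the dense orbit $G\cdot x_0 \subseteq X^*$ by sending $g\cdot x_0 \mapsto g\cdot y_0$; this is well-defined precisely because $G^*$ fixes $y_0$ (if $g\cdot x_0 = g'\cdot x_0$ then $g^{-1}g' \in G^*$, whence $g^{-1}g'\cdot y_0 = y_0$, i.e. $g\cdot y_0 = g'\cdot y_0$), and it is manifestly $G$-equivariant. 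The essential analytic step is to check that this map is uniformly continuous with respect to the metrics inducing the topologies (using that the metric on $X^*$ is the projection of the right-invariant metric on $G$ and that the $G$-action on the compact $Y$ is equicontinuous-in-the-relevant-sense on orbits), so that it extends continuously to the completion $X^*$. The resulting $\pi : X^* \to Y$ is continuous and $G$-equivariant with dense image; by compactness of $X^*$ the image is closed, hence all of $Y$ by minimality, giving surjectivity.

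The main obstacle I expect is entirely in the (ii) $\Rightarrow$ (i) direction, at the uniform continuity step: one must verify that the algebraically defined orbit map $g\cdot x_0 \mapsto g\cdot y_0$ respects the metric structure well enough to extend to the completion. This is where the precompactness of the expansion and the right-invariance of the metric on $G$ must be used carefully, since a priori the $G$-action on $Y$ need not be isometric, and controlling the modulus of continuity of the extension is the genuinely technical part. The two implications themselves reduce, once the dictionary ``$G^*$ = stabilizer of the dense-orbit point $x_0$'' is in place, to formal manipulations with equivariance and compactness; everything hinges on making that dictionary precise and on the continuity estimate.
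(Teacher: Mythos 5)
Your proposal follows the paper's proof essentially verbatim: for (i) $\Rightarrow$ (ii) you evaluate the factor map at the identity coset (which is $G^{*}$-fixed in $X^{*}$), and for (ii) $\Rightarrow$ (i) you extend the orbit map $g\cdot x_{0}\mapsto g\cdot y_{0}$ from the dense copy of $G/G^{*}$ to the completion $X^{*}$ and get surjectivity from compactness of the image plus minimality. The one step you flag as the main obstacle --- uniform continuity of that orbit map --- is resolved in the paper by the standard fact that for any continuous action of $G$ on a \emph{compact} space, the map $g\mapsto g\cdot y_{0}$ is automatically right uniformly continuous (this needs neither an invariant metric on $Y$ nor precompactness of the expansion; precompactness is only used to make $X^{*}$ itself compact), so the extension to $X^{*}=\widehat{G/G^{*}}$ exists exactly as you describe and no further estimate on the modulus of continuity is required.
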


This result allows us to show that Question 2 has a negative answer by exhibiting concrete examples of classes $\K$ and $\K^{*}$ where the flow $G\curvearrowright X^{*}$ is universal but the Ramsey property does not hold for $\K^{*}$ (see Section \ref{section:univRP}). However, quite surprisingly, we are not able to settle the case of Question 1.

Next, we turn to a combinatorial reformulation of relative extreme amenability. Let $\K$ be a class of $L$-structures and $\K^{*}$ an expansion of $\K$ in $L^{*}$, that is, a class of $L^{*}$-structures such that every element of $\K^{*}$ is an expansion of an element of $\K$. When $\m A, \m B \in \K$, the set of all embedings from $\m A$ into $\m B$ is denoted by  $$\binom{\m B}{\m A}_{Emb}.$$

Let $\m B^{*}$ be an expansion of $\m B$ in $\K^*$ and $a\in \binom{\m B}{\m A}_{Emb}$. The substructure of $\m B^{*}$ supported by $a(\m A)$ is an expansion of $\m A$ in $\K^*$. Using $a$, we can then define an expansion of $\m A$ in $\K^{*}$ as follows: for $i\in I$, call $\alpha(i)$ the arity of $R_{i}$. Then, set $$ \forall i\in I^{*} \quad R^{a}_{i}(x_{1},\ldots,x_{\alpha(i)}) \Leftrightarrow R^{\m B^{*}} _{i}(a(x_{1}),\ldots,a(x_{\alpha(i)})).$$

We will refer to $(a(\m A), \vec R^{a})$ as the \emph{canonical expansion} induced by $a$ on $\m A$. If $a' \in \binom{\m B}{\m A}_{\Emb}$, write $a\cong_{\m B^{*}} a'$ when the canonical expansions on $\m A$ induced by $a$ and $a'$ are equal (not only isomorphic).

\begin{defn}

\label{defn:rRPEmb}

Let $\K$ be a class of finite $L$-structures and $\K^{*}$ an expansion of $\K$ in $L^{*}$. Say that the pair $(\K, \K^*)$ has the \emph{relative Ramsey property for embeddings} when for every $k\in \N$, $\m A \in \K$, $\m B^{*} \in \K^*$, there exists $\m C \in \K$ such that for every coloring $c:\funct{\binom{\m C}{\m A}_{\Emb}}{[k]}$, there exists $b \in \binom{\m C}{\m B}_{\Emb}$ such that: $$ \forall a_{0}, a_{1} \in \binom{\m B}{\m A}_{\Emb} \quad a_{0}\cong_{\m B^{*}} a_{1} \ \Rightarrow \ c(ba_{0}) = c(ba_{1}).$$

\end{defn}


\begin{thm}

\label{thm:univrelativeRP}

Let $\m F$ be a Fra\"iss\'e structure in $L$ and let $\m F^{*}$ be a Fra\"iss\'e precompact relational expansion of $\m F$ in $L^{*}$. Then the following are equivalent: 

\begin{enumerate}

\item[i)] The pair $(G, G^{*})$ is relatively extremely amenable. 

\item[ii)] The elements of $\Age(\m F^{*})$ are rigid and the pair $(\Age(\m F), \Age(\m F^{*}))$ has the relative Ramsey property for embeddings. 
\end{enumerate}

\end{thm}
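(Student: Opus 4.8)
The plan is to prove the equivalence by translating the dynamical statement about relative extreme amenability into a combinatorial statement about finite structures, using the standard Fra\"iss\'e-theoretic dictionary in the style of \cite{KPT}. The key bridge is the fact that the relevant universal object for the dynamics of $(G,G^{*})$ is the space of expansions, and that continuous $G$-actions on compact spaces can be understood through colorings of finite embedding-configurations. First I would set up the Stone-space picture: I would consider the compact space $X^{*}$ (or equivalently the space of all admissible $L^{*}$-expansions of $\m F$ whose finite substructures lie in $\Age(\m F^{*})$), together with the natural $G$-action. The point is that finding a $G^{*}$-fixed point in an arbitrary compact $G$-flow can be reduced, by a standard argument, to a statement about finite colorings, because $G^{*}$ is a closed (hence Roelcke-precompact-relevant) subgroup and compact $G$-flows are inverse limits of flows coming from finite data.

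\smallskip

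For the direction (ii) $\Rightarrow$ (i), I would argue the contrapositive of the combinatorial failure, or argue directly that the relative Ramsey property for embeddings lets one find, for any finite coloring of embeddings coming from a continuous function on a compact $G$-flow, a copy of $\m B$ on which the coloring only depends on the canonical expansion induced by the embedding. Rigidity of the elements of $\Age(\m F^{*})$ is needed to ensure that there are no nontrivial automorphisms mixing up the expansions, so that the canonical expansion is a genuine invariant and the relative Ramsey property has content. The combinatorial statement then produces, via a compactness argument (an inverse-limit or ultrafilter argument over the finite approximations), a single point whose orbit under $G$ is controlled by $G^{*}$, yielding a $G^{*}$-fixed point in every compact $G$-flow. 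Concretely, I would encode a continuous action $G \curvearrowright Y$ and a point $y_{0}$ by the induced coloring of embeddings $a \mapsto$ (approximate location of $a \cdot y_{0}$), apply the relative Ramsey property to homogenize over $\cong_{\m B^{*}}$-classes, and pass to the limit to get a point fixed by $G^{*}$.

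\smallskip

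For the direction (i) $\Rightarrow$ (ii), I would again argue the contrapositive: if the elements of $\Age(\m F^{*})$ fail to be rigid, or if the relative Ramsey property for embeddings fails, I would build a compact $G$-flow with no $G^{*}$-fixed point. Failure of rigidity is the easy case, producing a nontrivial finite flow on which $G^{*}$ acts without fixed points (essentially a quotient of $X^{*}$ that is not a single point where it should be). Failure of the relative Ramsey property gives, for some $k$, $\m A$, $\m B^{*}$, a sequence of bad colorings of $\binom{\m C}{\m A}_{\Emb}$ with no $\cong_{\m B^{*}}$-homogeneous copy of $\m B$; I would assemble these colorings into a single continuous $[k]$-valued function on a compact $G$-flow (the space of colorings with the shift-like $G$-action) and verify that the resulting flow has no $G^{*}$-fixed point, since a $G^{*}$-fixed point would exactly provide the homogenization that is assumed to be impossible.

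\smallskip

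The main obstacle I anticipate is the careful verification that the combinatorial $\cong_{\m B^{*}}$-homogeneity condition corresponds precisely to $G^{*}$-invariance at the level of the limit flow, rather than to some strictly weaker or stronger condition. This is where the precise definition of the canonical expansion induced by an embedding, together with the precompactness of the expansion $\m F^{*}$ (which guarantees the relevant space of expansions is compact and that there are only finitely many expansions of each finite structure), must be used delicately to ensure the translation is an exact equivalence and not merely an implication in one direction. I expect the rigidity hypothesis to be exactly what repairs the potential gap, by making the correspondence between embeddings modulo $\cong_{\m B^{*}}$ and orbits under $G^{*}$ bijective rather than merely surjective.
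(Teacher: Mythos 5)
Your overall strategy coincides with the paper's: for ii)$\Rightarrow$i) you discretize a continuous function on a compact $G$-flow into a finite coloring of $\binom{\m F}{\m A}_{\Emb}$, homogenize over $\cong_{\m B^{*}}$-classes using the relative Ramsey property, and pass to a fixed point by a compactness (finite intersection property) argument; for the Ramsey half of i)$\Rightarrow$ii) your contrapositive is just the paper's direct argument run backwards, using the shift flow $\overline{G\cdot c}\subset [k]^{\binom{\m F}{\m A}_{\Emb}}$ and the fact that a $G^{*}$-fixed point there is a coloring constant on $G^{*}$-orbits, hence on $\cong_{\m B^{*}}$-classes. These parts are fine in outline, modulo the standard technical work (passing from embeddings to group elements via stabilizers, and from continuous $\R^{p}$-valued functions to finite-range ones).

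There is, however, a genuine gap in your treatment of rigidity in i)$\Rightarrow$ii). You assert that failure of rigidity yields ``a nontrivial finite flow on which $G^{*}$ acts without fixed points (essentially a quotient of $X^{*}$),'' but you never exhibit such a flow, and it is not clear one exists in that form: a continuous $G$-action on a finite discrete set requires open point stabilizers, and a nontrivial automorphism of some $\m A^{*}\in\Age(\m F^{*})$ does not hand you one, nor does it give a natural $G$-equivariant finite quotient of $X^{*}$. The paper's argument is different and does not pass through $X^{*}$ at all: it takes the compact $G$-flow $\LO(\m F)\subset [2]^{\m F\times\m F}$ of all linear orderings of $\m F$ under the logic action, uses relative extreme amenability to extract a $G^{*}$-fixed ordering $<^{*}$, and then, given an automorphism $\varphi$ of a finite $\m A^{*}\subset\m F^{*}$, extends it by ultrahomogeneity of $\m F^{*}$ to an element of $G^{*}$, which must preserve $<^{*}$; hence $\varphi$ preserves a linear ordering of a finite set and is trivial. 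Without identifying this (or some equivalent) auxiliary flow, your proof of rigidity does not go through. A second, more minor, inaccuracy: you locate the role of rigidity in making the correspondence between $\cong_{\m B^{*}}$-classes and $G^{*}$-orbits of embeddings bijective, but that correspondence already follows from ultrahomogeneity of $\m F^{*}$ alone (two embeddings induce the same canonical expansion iff they differ by an element of $G^{*}$); rigidity is not what carries the equivalence in the embedding setting, and indeed the paper's proof of ii)$\Rightarrow$i) does not invoke it there.
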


Then, we investigate the properties of a weakening of the relative Ramsey property for embeddings, which involves only structures, as opposed to embeddings.

\begin{defn}

\label{defn:rRP}

Let $\K$ be a class of finite $L$-structures and $\K^{*}$ an expansion of $\K$ in $L^{*}$. Say that the pair $(\K, \K^*)$ has the \emph{relative Ramsey property for structures} when for every $k\in \N$, $\m A \in \K$ and $\m B^{*} \in \K^*$, there exists $\m C \in \K$ such that for every coloring $c:\funct{\binom{\m C}{\m A}}{[k]}$, there exists $b \in  \binom{\m C}{\m B}_{\Emb}$ such that: $$ \forall \mc A_{0}, \mc A _{1}\in \binom{\m B}{\m A} \ \left(\mc A_{0}\cong_{\m B^{*}} \mc A_{1} \ \Rightarrow \ c(b(\mc A_{0})) = c(b(\mc A_{1}))\right).$$

Above, $\mc A_{0}\cong_{\m B^{*}} \mc A_{1}$ means that $\restrict{\m B^{*}}{\tilde A_{0}} \cong \restrict{\m B^{*}}{\tilde A_{1}}$. 

\end{defn}

This weakening may be strictly weaker than the embedding version, but at the combinatorial level, it is good enough to play the role of a ``weak Ramsey property'' as described above:

\begin{thm}

\label{thm:relRPRP}

Let $\m F$ be a Fra\"iss\'e structure in $L$ and let $\m F^{*}$ be a Fra\"iss\'e precompact expansion of $\m F$ in $L^{*}$ whose age consists of rigid structures. Assume that the pair $(\Age(\m F), \Age(\m F^{*}))$ has the relative Ramsey property for structures and that $\Age(\m F^{*})$ has the expansion property relative to $\Age(\m F)$. Then $\Age(\m F^{*})$ has the Ramsey property. 

\end{thm}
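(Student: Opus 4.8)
The plan is to deduce the (absolute) Ramsey property of $\Age(\m F^{*})$ from the two hypotheses by combining them directly at the combinatorial level, without passing back through dynamics. The key observation is that the Ramsey property for $\K^{*} = \Age(\m F^{*})$ asks: given $k$, $\m A^{*} \in \K^{*}$ and $\m B^{*} \in \K^{*}$, to find $\m C^{*} \in \K^{*}$ such that $\m C^{*} \arrows{(\m B^{*})}{}{k} \m A^{*}$ in the usual sense for embeddings. The relative Ramsey property for structures gives me colorings stabilized on copies of the \emph{reduct} $\m A$ that carry the same induced expansion, while the relative expansion property is exactly the tool that lets me upgrade ``same induced expansion'' into ``isomorphic as $\K^{*}$-structures'' once I have absorbed everything into a large enough ambient structure.

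First I would fix $\m A^{*}, \m B^{*} \in \K^{*}$ and $k$, and let $\m A$, $\m B$ be their reducts. Applying the relative Ramsey property for structures to $\m A$, $\m B^{*}$ (and $k$) produces $\m C \in \K = \Age(\m F)$. I then need to expand $\m C$ to some $\m C^{*} \in \K^{*}$; here is where the expansion property relative to $\K$ enters. The expansion property says that for every $\m A^{*} \in \K^{*}$ there is $\m D \in \K$ such that every expansion $\m D^{*} \in \K^{*}$ of $\m D$ contains a copy of $\m A^{*}$. The point of invoking it is to guarantee that when I color the embeddings of $\m A^{*}$ into $\m C^{*}$, I can relate them to the reduct-level copies of $\m A$ inside $\m C$ on which the relative Ramsey property gives control: an embedding of $\m A^{*}$ into $\m C^{*}$ is the same datum as an embedding $a : \m A \to \m C$ together with the requirement that the canonical expansion $\vec R^{a}$ induced on $a(\m A)$ by $\m C^{*}$ coincides with the given structure on $\m A^{*}$. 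So a $k$-coloring of $\binom{\m C^{*}}{\m A^{*}}_{\Emb}$ lifts to a coloring of a subset of $\binom{\m C}{\m A}$, and the relative Ramsey property, applied after enlarging $\m C$ if necessary so that the expansion property is available, yields a copy $b$ of $\m B$ inside $\m C$ on which copies of $\m A$ with the same induced expansion get the same color.

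The main obstacle, and the place I expect most of the work to go, is the bookkeeping to pass from ``$\m A_{0} \cong_{\m B^{*}} \m A_{1}$'' (same expansion induced by a \emph{fixed} expansion $\m B^{*}$) to genuine isomorphism of $\K^{*}$-substructures, and to match these two notions across the reduct $\m C$ and a chosen expansion $\m C^{*}$. Concretely, the relative property only controls copies of $\m A$ whose induced expansion agrees with what $\m B^{*}$ prescribes; to obtain the full Ramsey statement I must ensure that the target copy $b(\m B)$ sits inside $\m C$ with $\m C^{*}$ restricting to $\m B^{*}$ on it, so that the stabilized colors correspond exactly to the isomorphism type $\m A^{*}$. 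Rigidity of the elements of $\K^{*}$ is what makes $\cong_{\m B^{*}}$ behave well, since it prevents distinct embeddings from being identified by automorphisms and guarantees that ``isomorphic induced expansion'' and ``equal induced expansion up to the unique isomorphism'' coincide. I would therefore first record a lemma that, under rigidity, an embedding $\m A^{*} \to \m B^{*}$ is determined by its reduct $\m A \to \m B$ together with the induced-expansion condition, and that two such embeddings are $\cong_{\m B^{*}}$-equivalent precisely when their images carry isomorphic $\K^{*}$-structure.

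Finally, I would assemble these pieces: choose $\m C$ from the relative Ramsey property applied to a $\m B$ large enough (via the expansion property) to absorb $\m A^{*}$, fix an arbitrary expansion $\m C^{*} \in \K^{*}$ of $\m C$ (which exists because $\K^{*}$ is an expansion of $\K$ and is cofinal), and verify that $\m C^{*}$ witnesses $\m C^{*} \arrows{(\m B^{*})}{}{k} \m A^{*}$. The verification reduces a coloring of $\binom{\m C^{*}}{\m A^{*}}_{\Emb}$ to a coloring of the relevant copies of $\m A$ in $\m C$, applies the relative Ramsey property to get a monochromatic-up-to-$\cong_{\m B^{*}}$ copy $b(\m B) \subseteq \m C$, and then uses the expansion property together with rigidity to locate inside $b(\m B)$ a copy of $\m B^{*}$ all of whose sub-copies of $\m A^{*}$ receive a single color. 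The only genuinely delicate point is ensuring compatibility of the fixed expansion $\m C^{*}$ with the expansion $\m B^{*}$ on the copy produced, which is exactly what the relative expansion property is designed to deliver, so I expect the argument to close once that compatibility is arranged.
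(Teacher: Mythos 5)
There is a genuine gap, and it sits exactly at the point you flag as "the only genuinely delicate point": the expansion property does \emph{not} deliver the compatibility you need there. The relative Ramsey property for structures, applied to $\m A$ and some $\m B^{*}$ (or to a larger $\m D^{*}$), produces an embedding $b$ of the \emph{reduct} $\m B$ into $\m C$ such that the coloring is constant on the $\cong_{\m B^{*}}$-classes \emph{transported via $b$}, i.e.\ on classes determined by the expansion $\m B^{*}$ that you fed into the hypothesis. But once you fix an arbitrary expansion $\m C^{*}$ of $\m C$, the expansion that $\m C^{*}$ actually induces on $b(\m B)$ is some other expansion of $\m B$, a priori unrelated to the transported $\m B^{*}$; its classes need not refine, coarsen, or otherwise align with the classes on which the coloring was stabilized. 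The expansion property only guarantees that a suitable $\m D$ has a copy of $\m B^{*}$ inside \emph{every} expansion of $\m D$; it says nothing about matching the two expansions on $b(\m D)$, so the copy of $\m B^{*}$ you locate inside $\restrict{\m C^{*}}{b(D)}$ will in general have its copies of $\m A^{*}$ spread over several of the stabilized classes, and your argument does not close.

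The missing ingredient is the product-coloring/counting trick that underlies the result the paper cites. Concretely: enrich the given coloring to $c'(\mc A) = \bigl(c(\mc A), \text{isomorphism type of } \restrict{\m C^{*}}{\mc A}\bigr)$, a $k\cdot t$-coloring where $t$ is the number of non-isomorphic expansions of $\m A$ in $\K^{*}$; choose $\m D$ by the expansion property so that every expansion of $\m D$ contains a copy of $\m B^{*}$ and realizes all $t$ expansions of $\m A$; apply the relative Ramsey property to $c'$. On the resulting $b(\m D)$ the map sending a $\cong_{\m D^{*}}$-class to the $\restrict{\m C^{*}}{\cdot}$-type of its image is a well-defined surjection between two $t$-element sets, hence a bijection, and only then does "same induced expansion from $\m C^{*}$" imply "same color." The paper avoids writing this out by observing that the relative Ramsey property for structures gives each $\m A$ a finite Ramsey degree in $\Age(\m F)$ bounded by $t$, and then citing the known fact (\cite{NVT3}, Section 5, Proposition 8) that such degree bounds together with the expansion property yield the Ramsey property for $\Age(\m F^{*})$; your proposal is in effect an attempt to reprove that proposition inline, but without the enrichment-and-pigeonhole step it does not go through.
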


Finally, we show that in order to guarantee the existence of an expansion with both the Ramsey and the expansion property, it is enough to prove the existence of an expansion with the relative Ramsey property for structures:

\begin{thm}

\label{thm:subclass}

Let $\m F$ be a Fra\" iss\'e structure in $L^{*}$ and let $\m F^{*}$ be a Fra\"iss\'e precompact relational expansion of $\m F$ in $L^{*}$. Assume that $\Age(\m F^{*})$ consists of rigid elements and that the pair $(\Age(\m F), \Age(\m F^{*}))$ has the relative Ramsey property for structures. Then $\Age(\m F^{*})$ admits a Fra\"iss\'e subclass with the Ramsey property and the expansion property relative to $\Age(\m F)$.  

\end{thm}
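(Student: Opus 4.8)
The plan is to produce the required Fraïssé subclass of $\Age(\m F^{*})$ by combining the three theorems already established in the excerpt. The key observation is that Theorem \ref{thm:relRPRP} turns the relative Ramsey property for structures (together with the expansion property) into the full Ramsey property, but it requires the expansion property as a hypothesis, which we do *not* have for $\Age(\m F^{*})$ itself. So the main task is to carve out, from inside $\Age(\m F^{*})$, a subclass that *does* enjoy the expansion property relative to $\Age(\m F)$ while retaining enough of the relative Ramsey machinery.

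**First I would** recall the standard fact from \cite{NVT3} (or \cite{KPT} in the pure order case) that every precompact relational expansion class admits a reasonable subclass with the expansion property: concretely, one looks at the subgroup $G^{**}\leq G$ corresponding to a minimal subflow of $G\curvearrowright X^{*}$. Passing to a minimal subflow $Y^{*}\subseteq X^{*}$ and letting $\m F^{**}$ be the Fraïssé structure whose automorphism group is the stabilizer arising from $Y^{*}$, the age $\Age(\m F^{**})$ is a Fraïssé subclass of $\Age(\m F^{*})$ that has the expansion property relative to $\Age(\m F)$ — this is exactly what property (ii) in the introduction guarantees, since minimality of the flow is equivalent to the relative expansion property.

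**Next,** I would verify that this subclass $\Age(\m F^{**})$ inherits the relative Ramsey property for structures from $\Age(\m F^{*})$. Here the point is that the relative Ramsey property for structures, via Theorem \ref{thm:univrelativeRP} and Theorem \ref{thm:univREA}, is equivalent to relative extreme amenability of $(G,G^{*})$, which in turn is equivalent to universality of $G\curvearrowright X^{*}$. Since passing to a minimal subflow preserves universality of the ambient flow in the appropriate sense, the pair $(G,G^{**})$ remains relatively extremely amenable, so $(\Age(\m F),\Age(\m F^{**}))$ again has the relative Ramsey property for structures. Rigidity of the elements is clearly inherited, being an absolute property of the finite structures involved.

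**Finally,** having arranged that $\Age(\m F^{**})$ is a Fraïssé subclass of $\Age(\m F^{*})$ consisting of rigid structures, with both the relative Ramsey property for structures and the expansion property relative to $\Age(\m F)$, I would apply Theorem \ref{thm:relRPRP} directly to $\m F^{**}$ to conclude that $\Age(\m F^{**})$ has the Ramsey property. Together with the expansion property already secured, this exhibits $\Age(\m F^{**})$ as the desired subclass. \textbf{The hard part} will be the second step: justifying cleanly that the relative Ramsey property for structures descends to the subclass. The subtlety is that shrinking the expansion class changes which colorings must be stabilized (the equivalence relation $\cong_{\m B^{*}}$ depends on the expansion), so one must check that the dynamical equivalences are genuinely preserved under passage to the minimal subflow rather than merely invoking them formally; this likely requires identifying $G^{**}$ explicitly as the stabilizer of a point in the minimal subflow and re-running the translation between the combinatorics and the fixed-point statement in Definition \ref{defn:rea}.
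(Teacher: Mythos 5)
Your overall skeleton matches the paper's: pass to a point $\vec S$ in $\overline{G\cdot\vec R^{*}}$ whose orbit closure is minimal, so that $\Age(\m F,\vec S)$ is a subclass of $\Age(\m F^{*})$ with the expansion property relative to $\Age(\m F)$, and then combine the relative Ramsey property for structures with the expansion property to get the Ramsey property. But your second step --- the one you correctly flag as the hard part --- rests on a false premise. You claim that the relative Ramsey property for \emph{structures} is, via Theorems \ref{thm:univREA} and \ref{thm:univrelativeRP}, equivalent to relative extreme amenability of $(G,G^{*})$ and hence to universality of $G\curvearrowright X^{*}$. Those theorems concern the relative Ramsey property for \emph{embeddings}; the paper is explicit that the structure version is implied by the embedding version and that the converse ``does not seem to hold in general.'' Under the hypotheses of Theorem \ref{thm:subclass} you therefore do not know that $G\curvearrowright X^{*}$ is universal, so the dynamical detour through minimal subflows of a universal flow cannot even get started. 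The fact you actually need is trivial: by Proposition \ref{prop:relRPsub}, the relative Ramsey property for structures passes to any subclass $\K^{**}\subset\K^{*}$ directly from the definition, since the definition quantifies over expansions $\m B^{*}$ in the expansion class and shrinking that class only removes conditions. No re-running of the combinatorics-to-dynamics translation is needed or available.

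A second, smaller gap: you apply Theorem \ref{thm:relRPRP} directly to $\m F^{**}$, which presupposes that $(\m F,\vec S)$ is a Fra\"iss\'e structure, i.e. that $\Age(\m F,\vec S)$ has the amalgamation property. At that stage you only know hereditarity and joint embedding; the paper deduces amalgamation \emph{last}, from the Ramsey property together with rigidity. To avoid the circularity, do what the paper does: invoke the underlying combinatorial statement (finite Ramsey degrees bounded by the number of expansions, together with hereditarity, joint embedding and the expansion property, imply the Ramsey property --- \cite{NVT3}, Section 5, Proposition 8), which does not require amalgamation up front, and only then conclude that the class is Fra\"iss\'e.
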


The paper is organized as follows: Section \ref{section:basics} contains all basic notions concerning Fra\"iss\'e theory, structural Ramsey property and precompact relational expansions. Section \ref{section:univ} contains a proof of Theorem \ref{thm:univREA}. Section \ref{section:univRP} exhibits concrete classes answering Question 2. Section \ref{section:relativeRP} provides a proof of Theorem \ref{thm:univrelativeRP}. In Section \ref{section:conseq}, a more detailed study of the relative Ramsey property for embeddings is carried. Finally, Section \ref{section:RPstruct} concentrates on the relative Ramsey property for structures and contains the proofs of Theorem \ref{thm:relRPRP} and Theorem \ref{thm:subclass}. 

\

\textbf{Acknowledgements}: I would like to sincerely thank Miodrag Soki\'c as well as the anonymous referee for their comments which greatly improved the quality of the paper.

\section{Ramsey property, expansion property, precompact expansions}

\label{section:basics}

The purpose of this section is to describe the global framework where our study is taking place. Our main references here are \cite{KPT} for Fra\"iss\'e theory and structural Ramsey theory and \cite{NVT3} for precompact expansions. 

\subsection{Fra\"iss\'e theory}

In what follows, $\N$ denotes the set $\{ 0, 1, 2, \ldots\}$ of natural numbers and for a natural number $m$, $[m]$ denotes the set $\{ 0, \ldots, m-1\}$. We will assume that the reader is familiar with the concepts of first order logic, first order structures, Fra\"iss\'e theory (cf \cite{KPT}, section 2), reducts and expansions (cf \cite{KPT}, section 5). If $L$ is a first order signature and $\m A$ and $\m B$ are $L$-structures, we will write $\m A\leq \m B$ when $\m A$ embeds in $\m B$, $\m A \subset \m B$ when $\m A$ is a substructure of $\m B$ and $\m A \cong \m B$ when $\m A$ and $\m B$ are isomorphic. When $L$ is countable, a \emph{Fra\"iss\'e class} in $L$ will be a countable class of finite $L$-structures of arbitrarily large sizes, satisfying the hereditarity, joint embedding and amalgamation property, and a \emph{Fra\"iss\'e structure} (or \emph{Fra\"iss\'e limit}) in $L$ will be a countable, locally finite, ultrahomogeneous $L$-structure. 

\subsection{Structural Ramsey theory}

In order to define the Ramsey property, let $k, l\in \N$ and $\m A, \m B, \m C$ be $L$-structures. The set of all \emph{copies} of $\m A$ in $\m B$ is written $$ \binom{\m B}{\m A} = \{ \mc A \subset \m B : \mc A \cong \m A\}.$$ 

We use the standard arrow partition symbol $$ \m C \arrows{(\m B)}{\m A}{k, l}$$ to mean that for every map $c: \funct{\binom{\m C}{\m A}}{[k]}$, thought as a $k$-coloring of the copies of $\m A$ in $\m C$, there is $\mc B \in \binom{\m C}{\m B}$ such that $c$ takes at most $l$-many values on $\binom{\mc B}{\m A}$. When $l=1$, this is written  $$ \m C \arrows{(\m B)}{\m A}{k}.$$ 

A class $\mathcal{K}$ of finite $L$-structures is then said to have the \emph{Ramsey property} when $$ \forall k \in \N \quad \forall \m A, \m B \in \mathcal{K} \quad \exists \m C \in \mathcal{K} \quad \m C \arrows{(\m B)}{\m A}{k}.$$

When $\mathcal{K} = \Age (\m F)$, where $\m F$ is a Fra\"iss\'e structure, this is equivalent, via a compactness argument, to: $$ \forall k \in \N \quad \forall \m A, \m B \in \mathcal{K} \quad \m F \arrows{(\m B)}{\m A}{k}.$$

\subsection{Precompact expansions}

Assume now that we have a first-order expansion of $L$, $L^{*} = L\cup \{ R_{i}:i\in I^{*}\}$, with $I^{*}$ countable and every symbol $R_{i}$ relational and not in $L$. Let $\K^{*}$ denote an \emph{expansion} of $\K$ in $L^{*}$ (that means that all elements of $\mathcal{K}^{*}$ are of the form $\m A ^{*} = (\m A, (R_{i} ^{\m A ^{*}})_{i\in I^{*}})$ and that every $\m A \in \mathcal K$ can be enriched to some element $\m A ^{*} = (\m A, (R_{i} ^{\m A ^{*}})_{i\in I^{*}})$ of $\K^{*}$ by adding some relations on $A$). For $\m A^{*} \in \mathcal K ^{*}$, the reduct of $\m A^{*}$ to $L$ is denoted by  $ \restrict{\m A^{*}}{L} = \m A$. Then, $\mathcal K ^{*}$ satisfies the \emph{expansion property} relative to $\mathcal K$ if, for every $ \m A \in \mathcal{K}$, there exists $ \m B \in \mathcal{K}$ such that $$\forall \m A^{*}, \m B^{*} \in\mathcal{K} ^{*} \quad (\restrict{\m A^{*}}{L} = \m A \ \  \wedge \ \ \restrict{\m B^{*}}{L} = \m B) \Rightarrow \m A^{*} \leq \m B^{*}. $$

Next, consider $\m{F}$, a Fra\"{i}ss\'e structure in $L$. For $i\in I^{*}$, the arity of the symbol $R_{i}$ is denoted by  $\alpha(i)$. We let $\m{F}^{*}$ be an expansion of $\m{F}$ in $L^{*}$.  We assume that $\m{F}^{*}$ is also Fra\"{i}ss\'e and write $\m{F}^{*} = (\m{F}, (R^{*}_{i})_{i\in I^{*}})$, or $(\m{F}, \vec{R}^{*})$. We also assume that $\m F$ and $\m F^{*}$ have the set $\N$ of natural numbers as universe. The corresponding automorphism groups are denoted by  $G$ and $G^{*}$ respectively. The group $G^{*}$ will be thought as a subgroup of $G$ and both are closed subgroups of $S_{\infty}$, the permutation group of $\N$ equipped with the topology generated by sets of the form $$ U_{g, F} = \{ h\in G : \restrict{h}{F} = \restrict{g}{F}\},$$ where $g$ runs over $G$ and $F$ runs over all finite subsets of $\N$. Note that the group $S_{\infty}$ admits two natural metrics: a left invariant one, $d_{L}$, defined as $$ d_{L}(g,h) = \frac{1}{2^{m}}, \quad m=\min \{ n\in \N : g(n)\neq h(n)\},$$ and a right-invariant one, $d_{R}$, given by $$d_{R}(g,h) = d_{L}(g^{-1}, h^{-1}).$$

In what follows, we will be interested in the set of all expansions of $\m F$ in $L^{*}$, which we think as the product $$P := \prod_{i\in I^{*}} [2]^{\m{F}^{\alpha(i)}}.$$ 

In this notation, the factor $[2]^{\m{F}^{\alpha(i)}} = \{ 0, 1\}^{\m{F}^{\alpha(i)}}$ is thought as the set of all $\alpha(i)$-ary relations on $\m F$. Each factor $[2]^{\m{F}^{\alpha(i)}}$ is equipped with an ultrametric $d_{i}$, defined by $$ d_{i}(S_{i},T_{i}) = \frac{1}{2^{m}}, \quad m = \min\{ n\in \N : \restrict{S_{i}}{[n]} \neq \restrict{T_{i}}{[n]}\}$$ where $\restrict{S_{i}}{[n]}$ (resp. $\restrict{T_{i}}{[n]}$) stands for $S_{i} \cap [n]^{a(i)}$ (resp. $T_{i} \cap [n]^{a(i)}$). 

The group $G$ acts continuously on each factor as follows: if $i\in I^{*}$, $S_{i}\in [2]^{\m{F}^{\alpha(i)}}$ and $g\in G$, then $g\cdot S_{i}$ is defined by $$\forall y_{1}\ldots y_{\alpha(i)} \in \m F \quad g\cdot S_{i}(y_{1}\ldots y_{\alpha(i)}) \Leftrightarrow S_{i}(g^{-1}(y_{1})\ldots g^{-1}( y_{\alpha(i)})).$$

This allows us to define a continuous action of $G$ on the product $P$ equipped with the supremum distance $d^{P}$ of all the distances $d_{i}$ (where $g\cdot \vec S$ is simply defined as $(g\cdot S_{i})_{i\in I^{*}}$ whenever $\vec S = (S_{i})_{i\in I^{*}} \in P$ and $g\in G$). As a set, $G/G^{*}$ can be thought as $G\cdot\vec{R}^{*}$, the orbit of $\vec{R}^{*}$ in $P$, by identifying $[g]$, the equivalence class of $g$,  with $g\cdot \vec{R}^{*}$. The metric $d_{R}$ induces a metric on the quotient $G/G^{*}$, which coincides with the restriction of $d^{P}$ on $G\cdot\vec{R}^{*}$ (see \cite{NVT3}, Proposition 1). Therefore, we can really think of the metric space $G/G^{*}$ as the metric subspace $G\cdot \vec R^{*}$ of $P$ and it can be shown that the space $G/G^{*}\cong G\cdot \vec R^{*}$ is precompact (i.e. has a compact completion, or, equivalently, a compact closure in $P$) iff every element of $\Age (\m F)$ has finitely many expansions in $\Age(\m F^{*})$ (see \cite{NVT3}, Proposition 2). In that case, we say that\emph{ $\Age(\m F^{*})$ is a precompact expansion of $\Age(\m F)$} (or that $\m F^{*}$ is a precompact expansion of $\m F$).

\section{Relative extreme amenability and universality}

\label{section:univ}

In this section, we prove Theorem \ref{thm:univREA}:

\begin{thmm}


Let $\m F$ be a Fra\"iss\'e structure and $\m F^{*}$ a Fra\"iss\'e precompact relational expansion of $\m F$. The following are equivalent: 

\begin{enumerate}

\item[i)] The flow $G\curvearrowright X^{*}$ is universal for minimal compact $G$-flows. 

\item[ii)] The pair $(G, G^{*})$ is relatively extremely amenable. 

\end{enumerate}

\end{thmm}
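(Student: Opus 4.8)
The plan is to reduce both implications to a single transfer principle: \emph{a minimal compact $G$-flow $Y$ is a factor of $G\curvearrowright X^{*}$ if and only if $Y$ carries a $G^{*}$-fixed point}. The starting observation is that $X^{*}$ itself carries a canonical $G^{*}$-fixed point, namely $\vec R^{*}$: since $G^{*}=\Stab_{G}(\vec R^{*})$ for the action of $G$ on $P$, we have $g\cdot\vec R^{*}=\vec R^{*}$ for every $g\in G^{*}$, and $\vec R^{*}$ lies in $X^{*}=\overline{G\cdot\vec R^{*}}$.

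For i) $\Rightarrow$ ii), I would argue as follows. Let $G\curvearrowright Y$ be an arbitrary compact $G$-flow. By a standard Zorn's lemma argument it contains a minimal subflow $Y_{0}$, which by universality is a factor of $X^{*}$ via some continuous $G$-equivariant surjection $\pi:X^{*}\to Y_{0}$. Then $\pi(\vec R^{*})$ is a $G^{*}$-fixed point: for $g\in G^{*}$, equivariance gives $g\cdot\pi(\vec R^{*})=\pi(g\cdot\vec R^{*})=\pi(\vec R^{*})$. Hence $Y\supseteq Y_{0}$ has a $G^{*}$-fixed point, and since $Y$ was arbitrary, the pair $(G,G^{*})$ is relatively extremely amenable.

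The substance lies in ii) $\Rightarrow$ i). Given a minimal compact $G$-flow $Y$, relative extreme amenability furnishes a $G^{*}$-fixed point $y_{0}\in Y$. Consider the orbit map $\psi:G\to Y$, $\psi(g)=g\cdot y_{0}$. Because $y_{0}$ is $G^{*}$-fixed, $\psi(gk)=g\cdot(k\cdot y_{0})=g\cdot y_{0}=\psi(g)$ for all $k\in G^{*}$, so $\psi$ is constant on the left cosets $gG^{*}$ and descends to a map $\tilde\psi:G/G^{*}\to Y$; under the isometric identification $G/G^{*}\cong G\cdot\vec R^{*}$ of Section \ref{section:basics} this sends $g\cdot\vec R^{*}\mapsto g\cdot y_{0}$ and is manifestly $G$-equivariant on the orbit. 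I would then extend $\tilde\psi$ continuously to the completion $X^{*}$: the extension is $G$-equivariant by density and continuity of the two actions, and its image is compact, hence closed, and contains the orbit $G\cdot y_{0}$, so by minimality of $Y$ it is onto. This produces the required factor map, proving universality.

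The main obstacle is precisely the existence of the continuous extension to $X^{*}$, i.e.\ the uniform continuity of $\tilde\psi$ with respect to the quotient of the right-invariant metric $d_{R}$. The key point, and where compactness of $Y$ is essential, is that the orbit map $g\mapsto g\cdot y_{0}$ is right-uniformly continuous: writing $g\cdot y_{0}=(gh^{-1})\cdot(h\cdot y_{0})$, it suffices to show that for every entourage $E$ of $Y$ there is a neighborhood $V$ of the identity with $(v\cdot z,z)\in E$ for all $z\in Y$ and all $v\in V$ --- that is, that $G\to\mathrm{Homeo}(Y)$ is continuous at the identity for the topology of uniform convergence. This follows from joint continuity of the action together with a finite subcover of $Y$ extracted from compactness. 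Since $d_{R}(g,h)$ being small is exactly the condition that $gh^{-1}$ be near the identity, this yields uniform continuity of $\psi$, hence of $\tilde\psi$ for the induced metric on $G/G^{*}$ (for cosets at quotient-distance $<\delta$ one selects representatives at $d_{R}$-distance $<\delta$), and therefore the sought-after extension to $X^{*}$.
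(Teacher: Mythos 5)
Your proposal is correct and follows essentially the same route as the paper: the forward direction pushes the canonical $G^{*}$-fixed point $[e]=\vec R^{*}$ of $X^{*}$ through the factor map onto a minimal subflow, and the converse extends the right-uniformly continuous orbit map $g\mapsto g\cdot y_{0}$ from $G/G^{*}$ to the completion $X^{*}$ and invokes minimality for surjectivity. The only difference is that you spell out the right uniform continuity of the orbit map (via equicontinuity at the identity on the compact space $Y$), which the paper simply asserts.
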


\begin{proof}[Proof of $i) \Rightarrow ii)$]

Let $G\curvearrowright X$ be a compact $G$-flow and let $Y\subset X$ be such that $G\curvearrowright Y$ is compact minimal. By universality, find $\pi : \funct{X^{*}}{Y}$ continuous, $G$-equivariant and surjective. Denote $\xi = \pi([e])$, where $e$ denotes the neutral element of $G$ (recall that $X^{*} = \widehat{G/G^{*}}$). Then, $\xi$ is $G^{*}$-fixed, because for $g\in G^{*}$: $$g\cdot\xi = g\cdot\pi([e])=\pi(g\cdot[e]) = \pi([g]) = \pi([e]) = \xi. \qquad \qedhere$$ 

\end{proof}

\begin{proof}[Proof of $ii)\Rightarrow i)$]

Let $G\curvearrowright X$ be a compact minimal $G$-flow. Find $\xi\in X$, $G^{*}$-fixed. Then, let $p : \funct{G}{X}$ be defined by $$p(g) = g\cdot \xi.$$

This map is $G$-equivariant, right uniformly continuous and constant on elements of $G/G^{*}$. Therefore, it induces $\bar p : \funct{G/G^{*}}{X}$, which is also $G$-equivariant and right uniformly continuous. Denote by $\pi$ the continuous extension of $\bar p$ to the completion $X^{*} = \widehat{G/G^{*}}$. Then $\pi$ is $G$-equivariant and surjective because its range is a compact subset of $X$ containing $G\cdot\xi$, which is dense in $X$. \qedhere

\end{proof}

\section{Universality vs Ramsey property}

\label{section:univRP}

A consequence of Theorem \ref{thm:univREA} is that for precompact expansions, universality of $G\curvearrowright X^{*}$ is not equivalent to Ramsey property for $\Age(\m F^{*})$. For example, consider the class $\mathcal U_{S}^{<}$ of finite ordered ultrametric spaces with distances in $S$, where $S$ is a finite subset of $\R$. The corresponding Fra\"iss\'e limit is a countable ordered ultrametric space, denoted by  $(\Ur^{ult}_{S}, <)$. As a linear ordering, it is isomorphic to $(\Q, <)$. Hence, $(\Ur^{ult}_{S}, <)$ can be thought as a precompact relational expansion of $(\Q, <)$, and the group $\Aut(\Ur^{ult}_{S}, <)$ can be thought as a closed subgroup of $\Aut(\Q, <)$. Because this latter group is extremely amenable (see \cite{Pe0}), the pair $(\Aut(\Q, <), \Aut(\Ur^{ult}_{S}, <))$ is relatively extremely amenable and the corresponding flow is universal. However, it is known that $\mathcal U_{S}^{<}$ does not have the Ramsey property, see \cite{NVT1}. A similar situation occurs with finite posets, considering $(\Q, <)$ and the Fra\"iss\'e limit $(\mathbb P, <)$ of the class of all finite ordered posets. This class does not have the Ramsey property (cf \cite{So0}, \cite{So1}), but the corresponding flow is universal. 

In the two previous examples, the pair of groups $(G,H)$ under consideration is proved to be relatively extremely amenable by producing an extremely amenable interpolant, i.e.  an extremely amenable closed subgroup $K$ of $G$ containing $H$. It is a natural question to ask whether every relatively extremely amenable pair of groups admits such an interpolant. The answer in general is negative, as witnessed by the pair $(S_{\infty}, \Aut(\Z, <))$. This result is due to Gutman. 

Finally, in view of the original question posed in \cite{KPT}, we do not know whether universality of $G\curvearrowright X^{*}$ implies Ramsey property of $\Age(\m F^{*})$ when $\m F^{*}$ is a pure order expansion of $\m F$. We believe that the answer should be negative, but were not able to construct any counterexample so far. In fact, results of Soki\'c (see \cite{So0}, \cite{So3}) provide a positive answer in a number of cases. As a possible strategy for a counterexample, start with a Fra\"iss\'e class $\Age(\m F)$ with the Ramsey property and consisting of rigid elements. Consider then the class $\K^{<}$ of all finite order expansions of elements of $\Age(\m F)$ and try to find a Fra\"iss\'e subclass $\mathcal K^{*}\subset \mathcal K^{<}$ without the Ramsey property. Then, calling $\m F^{*}$ the Fra\"iss\'e limit of $\K^{*}$ and denoting $G^{*}=\Aut(\m F^{*})$, we would have $(G, G^{*})$ relatively extremely amenable (because $G$ is extremely amenable), hence $G\curvearrowright X^{*}$ universal, while $\mathcal K^{*}$ does not have the Ramsey property.

\section{Relative extreme amenability and relative Ramsey property for embeddings}

\label{section:relativeRP}

In view of the two previous sections, it is natural to ask whether relative extreme amenability of a pair $(G, G^{*})$ as before can be seen at the level of $\Age(\m F)$ and $\Age(\m F^{*})$. The answer is positive, as shown by the following result. Note that the proof has the same pattern as the proof of Kechris-Pestov-Todorcevic theorem as presented in \cite{NVT3}, Theorem 1.

\begin{thmm}


Let $\m F$ be a Fra\"iss\'e structure and $\m F^{*}$ a Fra\"iss\'e precompact relational expansion of $\m F$. Then the following are equivalent: 

\begin{enumerate}

\item[i)] The pair $(G, G^{*})$ is relatively extremely amenable. 

\item[ii)] The elements of $\Age(\m F^{*})$ are rigid and the pair $(\Age(\m F), \Age(\m F^{*}))$ has the relative Ramsey property for embeddings. 
\end{enumerate}

\end{thmm}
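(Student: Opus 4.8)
The plan is to prove both implications following the pattern of the Kechris--Pestov--Todorcevic correspondence as presented in \cite{NVT3}, the one genuinely new ingredient being the systematic use of the fact that elements of $G^{*} = \Aut(\m F^{*})$ preserve the canonical expansions induced by embeddings. Concretely, if $h \in G^{*}$ and $a \in \binom{\m F}{\m A}_{\Emb}$, then for every $i$ and every tuple $\vec x$ one has $R_i^{ha}(\vec x) \Leftrightarrow R_i^{\m F^{*}}(ha(\vec x)) \Leftrightarrow R_i^{\m F^{*}}(a(\vec x)) \Leftrightarrow R_i^{a}(\vec x)$, so $a \cong_{\m F^{*}} ha$; conversely, two embeddings inducing the same canonical expansion differ by an element of $G^{*}$, by ultrahomogeneity of $\m F^{*}$. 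This is the bridge between the dynamical relation ``lying in the same $G^{*}$-orbit'' and the combinatorial relation $\cong_{\m B^{*}}$, and it is what makes the relative Ramsey property for embeddings the right combinatorial shadow of relative extreme amenability.

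For $ii) \Rightarrow i)$, I would fix a compact $G$-flow $G \curvearrowright X$ and, using that the set of $G^{*}$-fixed points is the intersection of the closed sets $\{x : (x,hx) \in V\}$ over $h \in G^{*}$ and entourages $V$, reduce by compactness to producing, for each finite $F \subseteq G^{*}$ and each entourage $V$, a point moved by less than $V$ under every $h \in F$. To this end I would fix $\xi_{0} \in X$ and work with the orbit map $g \mapsto g^{-1}\xi_{0}$, which is \emph{left}-uniformly continuous; hence there is $n$ such that its value, up to a prescribed $V$-ball, depends only on $\restrict{g}{[n]}$. Setting $\m A = \restrict{\m F}{[n]}$, $\m B = \restrict{\m F}{[N]}$ with $N$ large enough that $F^{-1}([n]) \subseteq [N]$, and $\m B^{*} = \restrict{\m F^{*}}{[N]}$, I would colour $\binom{\m C}{\m A}_{\Emb}$, with $\m C$ supplied by the relative Ramsey property and realized inside $\m F$, by the $V$-ball of $g^{-1}\xi_{0}$ for $g$ extending the embedding. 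The monochromatic copy $b$ then yields $\gamma \in G$ with $\restrict{\gamma}{[N]} = b$; since the inclusion $\m A \hookrightarrow \m B$ and each $\restrict{h^{-1}}{[n]}$ ($h \in F$) induce the same canonical $\m B^{*}$-expansion by the bridge above, the relative Ramsey property forces $\gamma^{-1}\xi_{0}$ and $(\gamma h^{-1})^{-1}\xi_{0} = h\gamma^{-1}\xi_{0}$ into the same $V$-ball, i.e. $\xi = \gamma^{-1}\xi_{0}$ is the desired approximately $F$-fixed point.

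For $i) \Rightarrow ii)$, rigidity comes first: relative extreme amenability applied to the compact $G$-flow $\LO(\N)$ of linear orderings of $\N$ produces a $G^{*}$-invariant ordering $<$; if some $\m A^{*} \in \Age(\m F^{*})$ carried a nontrivial automorphism $\sigma$, extending $\sigma$ to $h \in G^{*}$ by ultrahomogeneity of $\m F^{*}$ would give a nontrivial permutation of a finite set preserving $<$, which is impossible. For the relative Ramsey property I would argue by contradiction: if it failed for some $k, \m A, \m B^{*}$, a standard compactness argument yields a ``bad'' colouring $\chi : \binom{\m F}{\m A}_{\Emb} \to [k]$ admitting no good $b \in \binom{\m F}{\m B}_{\Emb}$. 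I would then let $G$ act on the orbit closure $\overline{G \cdot \chi}$ in $[k]^{\binom{\m F}{\m A}_{\Emb}}$ by $(g \cdot \eta)(a) = \eta(g^{-1}a)$ and use relative extreme amenability to extract a $G^{*}$-fixed point $\chi^{*}$. By the bridge, $\chi^{*}$ is constant on $\cong_{\m F^{*}}$-classes, so for any $b$ arising from an embedding of $\m B^{*}$ into $\m F^{*}$ it already satisfies the good-colouring condition; approximating $\chi^{*}$ by a translate $g \cdot \chi$ on the finite set of embeddings $b\binom{\m B}{\m A}_{\Emb}$ then exhibits $g^{-1}b$ as a good witness for $\chi$, the desired contradiction.

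The step I expect to be the main obstacle is the bookkeeping in $ii) \Rightarrow i)$: one must choose the orbit map $g \mapsto g^{-1}\xi_{0}$, rather than $g \mapsto g\xi_{0}$, precisely so that left-uniform continuity makes the colouring depend only on an initial segment $\restrict{g}{[n]}$, and one must then control the approximation errors so that ``same colour'' genuinely translates into ``$V$-close'' for the particular automorphisms $\gamma$ and $\gamma h^{-1}$ produced by the monochromatic copy. Everything else reduces either to the bridge computation above or to routine Fra\"iss\'e-theoretic manipulations — extending embeddings to automorphisms, realizing $\m C$ inside $\m F$, and the two compactness arguments.
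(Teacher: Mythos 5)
Your proposal is correct and follows essentially the same route as the paper: rigidity via a $G^{*}$-fixed point in $\LO(\m F)$, the relative Ramsey property via a $G^{*}$-fixed point in the orbit closure of the colouring, and the converse via the left-uniformly continuous orbit map $g\mapsto g^{-1}\cdot\xi_{0}$, a finite colouring by approximate values, and a finite-intersection-property compactness argument. The only (cosmetic) difference is that you phrase the approximate-fixed-point argument with entourages of $X$ where the paper routes it through bounded uniformly continuous $\R^{p}$-valued functions and two intermediate propositions about colourings of $G$ constant on cosets of point stabilizers.
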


\subsection{Proof of $i)\Rightarrow ii)$}

Assume that $(G, G^{*})$ is relatively extremely amenable. We first prove that all elements of $\Age (\m F^{*})$ are rigid. To do so, consider the set of all linear orderings $LO(\m F)$, seen as a subspace of the space $[2]^{\m F \times \m F}$. The group $G$ acts continuously on this later space via the logic action. The set $LO(\m F)$ is then a $G$-invariant compact subspace. Explicitly, $G$ acts on $LO(\m F)$ as follows: if $ \prec \in LO(\m F)$ and  $g \in G$, then $$ \forall x, y \in \m F \quad x (g\cdot\prec) y \Leftrightarrow g^{-1} (x) \prec g^{-1} (y).$$

By relative extreme amenability of $(G, G^{*})$, there is a $G^{*}$-fixed point in $LO(\m F)$, call it $<^{*}$. Consider now a finite substructure $\m A^{*} \subset \m F^{*}$ and let $\varphi$ be an automorphism of $\m A^{*}$. By ultrahomegeneity of $\m F^{*}$, $\varphi$ extends to an automorphism $\phi$ of $\m F^{*}$. Because $<^{*}$ is $G^{*}$-fixed, it is preserved under $\varphi$. Thus, on $A$, $<^{*}$ is preserved by $\varphi$, which means that $\varphi$ is trivial on $A$. This proves that $\m A^{*}$ is rigid.

To prove that $(\Age(\m F), \Age(\m F^{*}))$ has the relative Ramsey property for embeddings, it suffices to show that $\m F$ satisfies the property that is required for $\m C$ in Definition \ref{defn:rRPEmb}. A compactness argument allows then to find $\m C$. So consider $k\in \N$, $\m A \in \Age(\m F), \m B ^{*}\subset \m F^{*}$ finite and a coloring $$ c : \funct{\binom{\m F}{\m A}_{\Emb}}{[k]}.$$ 

Consider the compact space $[k]^{\binom{\m F}{\m A}_{\Emb}}$, acted on continuously by $G$ by shift: if $\chi \in [k]^{\binom{\m F}{\m A}_{\Emb}}$, $g\in G$ and $a \in \binom{\m F}{\m A}_{\Emb}$, then $$ g\cdot \chi (a) = \chi(g^{-1}a).$$

The set $\overline{G\cdot c}$ is a $G$-invariant compact subspace. By relative extreme amenability of $G$, there is a $G^{*}$-fixed point in $\overline{G\cdot c}$, call it $c_{0}$. The fact that $c_{0}$ is $G^{*}$-fixed means that $c_{0}(a_{0}) = c_{0}(a_{1})$ whenever $a_{0} \cong _{\m F^{*}} a_{1}$. Consider now the finite set $\binom{\m B}{\m A}_{\Emb}$, where $\m B :=\restrict{\m B^{*}}{L}$. Because $c_{0} \in \overline{G\cdot c}$, there is $g \in G$ so that $$\restrict{g\cdot c}{\binom{\m B}{\m A}_{\Emb}} = \restrict{c_{0}}{\binom{\m B}{\m A}_{\Emb}}.$$ 

So if $ a_{0}, a _{1}\in \binom{\m B}{\m A}_{\Emb}$ are such that $ a_{0}\cong_{\m B^{*}} a_{1} $ then $$c(g^{-1}a_{0}) = c(g^{-1}a_{1}).$$

It follows that $g^{-1}$ witnesses the relative Ramsey property for embeddings. $\Box$

\subsection{Proof of $ii)\Rightarrow i)$}

Assume that $\Age (\m F^{*})$ consists of rigid elements and the pair $(\Age(\m F), \Age(\m F^{*}))$ has the relative Ramsey property for embeddings. For $A \subset \N$ finite, we denote by $\Stab(A)$ the pointwise stabilizer $\Stab(A)$ in $G$ and we can make the identification: $$G/\Stab (A) = \binom{\m F}{\m A} _{\Emb}.$$

\begin{prop}

\label{prop:RP0}

Let $k\in \N$, $A\subset \N$ finite and supporting a substructure $\m A$ of $\m F$ and $F\subset G$ finite. Let $\bar f : \funct{G}{[k]}$ be constant on elements of $G/\Stab (A) $. Then there exists $g\in G$ such that$$\forall h, h' \in F \quad h'h^{-1} \in G^{*}\Rightarrow \bar f (gh) = \bar f (gh').$$

\end{prop}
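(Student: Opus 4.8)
The plan is to translate the combinatorial relative Ramsey property for embeddings into the group-theoretic language of cosets and stabilizers, and then use it to produce the desired $g$. The key identification, already set up in the excerpt, is $G/\Stab(A) = \binom{\m F}{\m A}_{\Emb}$: a left coset $h\Stab(A)$ corresponds to the embedding $\restrict{h}{A}$ of $\m A$ into $\m F$. Since $\bar f$ is constant on elements of $G/\Stab(A)$, it factors through a coloring $c:\binom{\m F}{\m A}_{\Emb}\to[k]$, defined by $c(\restrict{h}{A}) = \bar f(h)$. The goal is then to find $g\in G$ so that for all $h,h'\in F$ with $h'h^{-1}\in G^{*}$, we have $c(\restrict{gh}{A}) = c(\restrict{gh'}{A})$.

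First I would encode the finite data carried by $F$ into a single finite structure. The elements of $F$ give finitely many embeddings $\restrict{h}{A}$, and together with the condition $h'h^{-1}\in G^{*}$ these should be captured by choosing a sufficiently large finite $\m B^{*}\subset \m F^{*}$ containing all the images $h(A)$ for $h\in F$, together with enough of the structure of $\m F^{*}$ to detect the equivalence $\cong_{\m B^{*}}$. The crucial point to verify is that the group-theoretic condition $h'h^{-1}\in G^{*}$ on the pair $h,h'$ corresponds exactly to the combinatorial condition $a_{0}\cong_{\m B^{*}} a_{1}$ on the associated embeddings $a_{0} = \restrict{h}{A}$, $a_{1} = \restrict{h'}{A}$, once $\m B^{*}$ is chosen large enough: indeed $h'h^{-1}\in G^{*}$ means $h'h^{-1}$ preserves $\vec R^{*}$, which should translate into the canonical expansions induced by $a_{0}$ and $a_{1}$ (read off from $\m B^{*}$) being equal.

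Next I would invoke the relative Ramsey property for embeddings in the form it holds for $\m F$ itself, as established in the proof of $i)\Rightarrow ii)$ (equivalently, apply Definition \ref{defn:rRPEmb} and use that $\m F$ absorbs the witnessing $\m C$ by ultrahomogeneity). Applying it to $k$, $\m A$, and the chosen $\m B^{*}$, and to the coloring $c$, yields an embedding $b\in\binom{\m F}{\m B}_{\Emb}$ such that $c(ba_{0}) = c(ba_{1})$ whenever $a_{0}\cong_{\m B^{*}} a_{1}$. By ultrahomogeneity of $\m F$, this $b$ extends to an element $g\in G$. Tracing back through the identification, $g$ is precisely the element sought: for $h,h'\in F$ with $h'h^{-1}\in G^{*}$, the corresponding embeddings satisfy $a_{0}\cong_{\m B^{*}} a_{1}$, hence $c(\restrict{gh}{A}) = c(\restrict{gh'}{A})$, i.e. $\bar f(gh) = \bar f(gh')$.

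The main obstacle I anticipate is the translation step: verifying carefully that $h'h^{-1}\in G^{*}$ corresponds to $\restrict{h}{A}\cong_{\m B^{*}}\restrict{h'}{A}$ for a suitable finite $\m B^{*}$, and that $\m B^{*}$ can genuinely be taken finite while still detecting this equivalence for all the finitely many pairs coming from $F$. One must ensure $\m B^{*}$ is large enough to contain the relevant images and to record the canonical expansion data, yet this is manageable precisely because $F$ is finite and $\m F^{*}$ is a \emph{precompact} expansion, so each finite reduct has only finitely many expansions to keep track of. The rest is a matter of matching the quantifiers in Definition \ref{defn:rRPEmb} against the statement of the proposition.
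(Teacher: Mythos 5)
Your proposal is correct and follows essentially the same route as the paper's proof: factor $\bar f$ through a coloring of $\binom{\m F}{\m A}_{\Emb}$ via the identification $G/\Stab(A)=\binom{\m F}{\m A}_{\Emb}$, take $\m B^{*}\subset\m F^{*}$ finite containing the images $h(A)$ for $h\in F$, observe that $h'h^{-1}\in G^{*}$ forces $\restrict{h}{A}\cong_{\m B^{*}}\restrict{h'}{A}$, and apply the relative Ramsey property for embeddings (transferred to $\m F$ by ultrahomogeneity/compactness) to obtain $g$. The paper does exactly this, merely stating the translation step you flag as the main obstacle without further comment; your only slight misattribution is crediting the $\m F$-version of the property to the proof of $i)\Rightarrow ii)$ rather than to the compactness argument, but you immediately give the correct justification in parentheses.
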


\begin{proof}

The map $\bar f$ induces a map $f : \funct{G/\Stab (A)}{[k]}$, which we may think as a $k$-coloring of $\binom{\m F}{\m A}_{\Emb}$. 
Consider the set $ \{ [h] : h \in F\}$. It is a finite set of embeddings from $\m A$ to $\m F$. Therefore, we can find a finite substructure $\m B\subset \m F$ large enough so that the ranges of all those embeddings are contained in $\m B$. Let $\m B^{*}$ denote the substructure of $\m F^{*}$ supported by $B$. By relative Ramsey property for embeddings applied to $\m A$, $\m B^{*}$ and the coloring $f$, find $g\in G$ such that $$ \forall a_{0}, a _{1}\in \binom{\m B}{\m A}_{\Emb} \ \left(a_{0}\cong_{\m B^{*}} a_{1} \ \Rightarrow \ f(ga_{0}) = f(ga_{1})\right).$$

Consider now $h, h'\in F $ so that $h'h^{-1} \in G^{*}$. Then $[h]\cong_{\m B^{*}}[h']$ and so $$f(g[h]) = f(g[h']).$$ 

At the level of $\bar f$, that means $\bar f (gh) = \bar f (gh')$. $\Box$

\begin{prop}

\label{prop:RP1}

Let $p\in \N$, $f : \funct{G}{\R^{p}}$ left uniformly continuous and bounded (where $\R^{p}$ is equipped with its standard Euclidean structure), $F\subset G$ finite, $\varepsilon > 0$. Then there exists $g \in G$ such that $$ \forall h, h' \in F \quad h'h^{-1} \in G^{*} \Rightarrow \| f(gh) - f(gh')\| < \varepsilon.$$

\end{prop}

\begin{proof}

Let $m\in \N$. Note that as subsets of $G$, elements of $G/\Stab ([m]) $ have diameter $1/2^{m+1}$ with respect to the left invariant metric $d^{L}$ on $G$. Thus, by left uniform continuity, we can find $m\in \N$ large enough so that $f$ is constant up to $\varepsilon$ on each element of $G/\Stab ([m]) $. By local finiteness of $\m F$, let now $A\subset \N$ be finite, supporting a finite substructure $\m A$ of $\m F$ and such that $[m]\subset A$. Then $f$ is also constant up to $\varepsilon$ on each element of $G/\Stab (A)$. Because $f$ is also bounded, we can also find $\bar f : \funct{G}{\R^{p}}$ with finite range, constant on elements of $G/\Stab (A) $ and so that $\| f-\bar f\|_{\infty}<\varepsilon/2$. By Proposition \ref{prop:RP0}, there exists $g\in G$ such that $\bar f (gh) = \bar f(gh') $ whenever $h'h^{-1}\in G^{*}$. Then $\|f(gh) - f(gh')\| < \varepsilon$ whenever $h'h^{-1}\in G^{*}$.      \qedhere 

\end{proof}

We can now show that the pair $(G,G^{*})$ is relatively extremely amenable. Let $G \curvearrowright X$ be a continuous action, with $X$ compact. For $p\in \N$, $\phi : \funct{G}{\R^{p}}$ uniformly continuous and bounded, $F\subset G$ finite, $\varepsilon>0$, set $$ A_{\phi, \varepsilon, F} = \{ x\in X : \forall h \in F\cap G^{*} \quad \| \phi(h\cdot x) - \phi(x)\| \leq \varepsilon\}.$$ 

The family $(A_{\phi, \varepsilon, F})_{\phi, \varepsilon, F}$ is a family of closed subsets of $X$. We claim that it has the finite intersection property. Indeed, if $\phi_{1},\ldots, \phi_{l}, \varepsilon_{1},\ldots, \varepsilon_{l}, F_{1},\ldots, F_{l}$ are given, take $$\phi = (\phi_{1},\ldots,\phi_{l}), \quad \varepsilon = \min (\varepsilon_{1}, \ldots, \varepsilon_{l} ), \quad F = F_{1}^{-1}\cup\ldots\cup F_{l}^{-1}\cup \{e\}.$$ 

Fix $x\in X$ and consider the map $f : \funct{G}{\R^{p_{1}+\ldots+p_{l}}}$ defined by $$ \forall g\in G \quad f(g) = (\phi_{1}(g^{-1}\cdot x),\ldots,\phi_{l}(g^{-1}\cdot x))).$$

Because the maps $\phi_{i}$'s are uniformly continuous and the map $g\mapsto g^{-1}\cdot x$ is left uniformly continuous (cf \cite{Pe}, p40), the map $f$ is left uniformly continuous. By Proposition \ref{prop:RP1}, there exists $g\in G$ so that $$ \forall h, h' \in F \quad h'h^{-1}\in G^{*} \Rightarrow \| f(gh) - f(gh')\|<\varepsilon.$$ 

Equivalently, $$\forall i\leq l \quad \forall h, h'\in F \quad h'h^{-1}\in G^{*} \Rightarrow \|\phi_{i}(h^{-1}g^{-1}\cdot x) - \phi_{i}(h'^{-1}g^{-1}\cdot x )\| \leq \varepsilon_{i} .$$

Taking $x_{0} = g^{-1}\cdot x$ and $h'=e$, we obtain $$\forall i\leq l \quad \forall h\in F_{i} \quad \|\phi_{i}(h\cdot x_{0}) - \phi_{i}(x_{0} )\| \leq \varepsilon_{i} .$$ 

This proves the finite intersection property of the family $(A_{\phi, \varepsilon, F})_{\phi, \varepsilon, F}$. By compactness of $X$, it follows that this family has a non empty intersection. Consider any element $x$ of this intersection. We claim that $x$ is fixed under the action of $G^{*}$: if not, we would find $g\in G^{*}$ so that $g\cdot x\neq x$. Then, there would be a uniformly continuous function $\phi_{0} : \funct{X}{[0,1]}$ so that $\phi_{0}(x)=0$ and $\phi_{0}(g\cdot x)= 1$. That would imply $x\notin A_{\phi_{0}, 1/2, \{g\}}$, a contradiction. \qedhere

\end{proof}

\section{Versions and consequences of the relative Ramsey property}

\label{section:conseq}

\subsection{Canonical expansions}

Recall that when $\m A, \m B \in \K$, $\m B^{*}$ is an expansion of $\m B$ in $\K^*$ and $a\in \binom{\m B}{\m A}_{Emb}$, the canonical expansion induced by $a$ on $\m A$ is the structure $(a(\m A), \vec R^{a})$ defined by $$ \forall i \in I^{*}\quad R^{a}_{i}(x_{1},\ldots,x_{\alpha(i)}) \Leftrightarrow R^{\m B^{*}} _{i}(a(x_{1}),\ldots,a(x_{\alpha(i)})).$$

Note that $a$ is not completely characterized by the canonical expansion it induces on $\m A$ when $a(\m A)$ possesses a non-trivial automorphism, but that it is when $a(\m A)$ is rigid.  In the case where all expansions of $\m A$ are rigid, then the $\cong_{\m B*}$-equivalence classes are those sets of the form $\binom{\m B^{*}}{\m A^{*}}_{\Emb}$, where $\m A^{*}$ ranges over the set of all expansions (possibly isomorphic, but based on $A$) of $\m A$ in $\K^*$.

\subsection{More on the relative Ramsey property for embeddings}

In this section, we present simple facts related to the concept of relative Ramsey property for embeddings. 

\begin{prop}
Assume that the pair $(\mathcal K, \mathcal K^{*})$ has the relative Ramsey property for embeddings, then so does the pair $(\mathcal K, \mathcal K^{**})$ whenever $\mathcal K^{**}$ is an expansion of $\mathcal K$ in $L^{*}$ such that $\mathcal K^{**}\subset \mathcal K^{*}$.   

\end{prop}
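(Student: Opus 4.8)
The plan is to observe that the conclusion is essentially immediate from the hypothesis, since the class $\mathcal{K}^{**}$ sits inside $\mathcal{K}^{*}$. First I would fix the data witnessing what must be proved for $(\mathcal{K}, \mathcal{K}^{**})$: an integer $k \in \N$, a structure $\m A \in \mathcal{K}$, and an expansion $\m B^{**} \in \mathcal{K}^{**}$, with underlying reduct $\m B = \restrict{\m B^{**}}{L}$. The key point I would then make is that, because $\mathcal{K}^{**} \subseteq \mathcal{K}^{*}$, the structure $\m B^{**}$ is itself a member of $\mathcal{K}^{*}$. Hence the relative Ramsey property for embeddings of the pair $(\mathcal{K}, \mathcal{K}^{*})$ applies verbatim to the triple $(k, \m A, \m B^{**})$ and produces a structure $\m C \in \mathcal{K}$ with the desired partition behaviour.

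Next I would verify that this same $\m C$ witnesses the property for $(\mathcal{K}, \mathcal{K}^{**})$. For this I would check that the equivalence relation $\cong_{\m B^{**}}$ on $\binom{\m B}{\m A}_{\Emb}$ is literally the same object whether one regards $\m B^{**}$ as an element of $\mathcal{K}^{*}$ or of $\mathcal{K}^{**}$: by definition, $a_{0} \cong_{\m B^{**}} a_{1}$ holds precisely when the canonical expansions $(a_{0}(\m A), \vec R^{a_{0}})$ and $(a_{1}(\m A), \vec R^{a_{1}})$ induced on $\m A$ coincide, and these depend only on the fixed structure $\m B^{**}$, not on which ambient class one views it in. Therefore the conclusion supplied by $(\mathcal{K}, \mathcal{K}^{*})$, namely that for every colouring $c : \funct{\binom{\m C}{\m A}_{\Emb}}{[k]}$ there is $b \in \binom{\m C}{\m B}_{\Emb}$ with $c(b a_{0}) = c(b a_{1})$ whenever $a_{0} \cong_{\m B^{**}} a_{1}$, is exactly the statement required for $(\mathcal{K}, \mathcal{K}^{**})$.

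I do not anticipate a genuine obstacle here: the argument is a direct specialisation, so there is no new choice of $\m C$ to make, no compactness step, and no appeal to rigidity. The only subtlety worth spelling out is the bookkeeping observation that $\cong_{\m B^{**}}$ is intrinsic to the structure $\m B^{**}$, so that the partition conclusion transfers unchanged from the larger pair to the smaller one.
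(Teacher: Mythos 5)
Your argument is correct and is precisely the unpacking of what the paper dismisses as ``Direct from the definition'': since the property quantifies over $\m B^{*} \in \mathcal K^{*}$ and $\mathcal K^{**} \subseteq \mathcal K^{*}$, every instance for the smaller pair is already an instance for the larger one, and the relation $\cong_{\m B^{**}}$ is intrinsic to the structure. Nothing further is needed.
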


\begin{proof}
Direct from the definition. 
\end{proof}

For anyone familiar with Ramsey theory, this is a rather unexpected feature (most of Ramsey type properties are not preserved when passing to subclasses).  


\begin{prop}

Let $\m F$ be a Fra\"iss\'e structure and $\m F^{*}$ a Fra\"iss\'e precompact expansion of $\m F$. Then the pair $(\Age(\m F), \Age(\m F^{*}))$ has the relative Ramsey property for embeddings when for every $k\in \N$, $\m A \in \Age(\m F), \m B^{*} \subset \m F^{*}$ finite and $c:\funct{\binom{\m F}{\m A}_{\Emb}}{[k]}$, there exists $g\in G$ such that: $$ \forall a_{0}, a_{1} \in \binom{\m B}{\m A}_{\Emb} \quad a_{0}\cong_{\m B^{*}} a_{1} \ \Rightarrow \ c(ga_{0}) = c(ga_{1}).$$

\end{prop}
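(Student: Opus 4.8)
The plan is to obtain the finitary relative Ramsey property for embeddings from the hypothesis by a standard compactness argument. I would fix $k \in \N$, $\m A \in \Age(\m F)$ and, after replacing it by an isomorphic copy (which changes nothing), a finite $\m B^{*} \subset \m F^{*}$, and write $\m B = \restrict{\m B^{*}}{L}$. Arguing by contradiction, I would assume the relative Ramsey property fails for these data. Since $\Age(\m F)$ is precisely the class of finite substructures of $\m F$, this means every finite substructure of $\m F$ carries a coloring of its copies of $\m A$ admitting no $b \in \binom{\m C}{\m B}_{\Emb}$ that is constant on $\cong_{\m B^{*}}$-classes; the goal is to glue these into a single coloring of $\binom{\m F}{\m A}_{\Emb}$ with no good $g \in G$, contradicting the hypothesis.

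Concretely, using local finiteness I would fix an exhaustion $\m C_{0} \subset \m C_{1} \subset \cdots$ of $\m F$ by finite substructures (say $\m C_{n}$ generated by $\{0,\dots,n\}$), obtaining for each $n$ a bad coloring $c_{n} : \funct{\binom{\m C_{n}}{\m A}_{\Emb}}{[k]}$. Inside the compact product space $[k]^{\binom{\m F}{\m A}_{\Emb}}$, let $K_{n}$ be the set of colorings whose restriction to $\binom{\m C_{n}}{\m A}_{\Emb}$ has no good $b \in \binom{\m C_{n}}{\m B}_{\Emb}$. Each $K_{n}$ is nonempty (extend $c_{n}$ arbitrarily) and clopen (badness depends on finitely many coordinates), and the family is nested decreasing because a good $b$ in $\m C_{n}$ remains good in $\m C_{n+1}$ under the inclusion $\binom{\m C_{n}}{\m B}_{\Emb} \subseteq \binom{\m C_{n+1}}{\m B}_{\Emb}$, the colors of the embeddings involved being computed in $\m F$ and hence unchanged. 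Compactness then yields $c \in \bigcap_{n} K_{n}$, a coloring of $\binom{\m F}{\m A}_{\Emb}$ that is bad on every $\m C_{n}$ simultaneously.

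To close the contradiction I would feed $c$ into the hypothesis, producing $g \in G$ with $c(ga_{0}) = c(ga_{1})$ whenever $a_{0} \cong_{\m B^{*}} a_{1}$ in $\binom{\m B}{\m A}_{\Emb}$. Putting $b = \restrict{g}{\m B}$, the image $g(\m B)$ is a finite substructure of $\m F$, hence contained in $\m C_{n}$ for all large $n$, so $b \in \binom{\m C_{n}}{\m B}_{\Emb}$; and since each $a_{j}$ maps into $B$ we have $ba_{j} = ga_{j}$, so $b$ is constant on $\cong_{\m B^{*}}$-classes, i.e. good for $\m C_{n}$. This contradicts $c \in K_{n}$ and finishes the argument. (The reverse implication is immediate and need not be part of the proof: take the witnessing finite $\m C$, transport it into $\m F$, restrict the given coloring, and extend the resulting good copy of $\m B$ to an automorphism by ultrahomogeneity.)

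I expect the only genuinely delicate point to be the translation in the last paragraph: recognizing that the existential ``$\exists g \in G$'' of the hypothesis is really a statement about a single copy of $\m B$ inside $\m F$, and verifying that such a copy is absorbed by the exhaustion so that a good $g$ yields a good $b$ at some finite level. The compactness scaffolding (nonempty, clopen, nested) is routine once one observes that goodness of a copy depends only on the restriction of the coloring, which is exactly what makes the nesting $K_{n+1} \subseteq K_{n}$ and the final contradiction compatible.
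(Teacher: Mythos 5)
Your argument is correct and is precisely the ``standard compactness argument'' that the paper invokes without giving details: pass to a limit of bad colorings in the compact space $[k]^{\binom{\m F}{\m A}_{\Emb}}$ via a nested family of nonempty clopen sets, then contradict the hypothesis by absorbing $g(\m B)$ into a finite level of the exhaustion. The delicate points you flag (clopenness from finite dependence, the nesting $K_{n+1}\subseteq K_{n}$, and $ba_{j}=ga_{j}$) are all handled correctly, so nothing is missing.
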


\begin{proof}
A standard compactness argument. 
\end{proof}

Note that because $\m B^{*}$ is a substructure of $\m F^{*}$, $a_{0}\cong_{\m B^{*}} a_{1}$ is equivalent to $a_{0}\cong_{\m F^{*}} a_{1}$, which is equivalent to the existence of $g^{*}\in G^{*}$ so that $a_{1} = g^{*}a_{0}$ (use ultrahomogeneity of $\m F^{*}$). 


\begin{prop}

Let $\m F$ be a Fra\"iss\'e structure and $\m F^{*}$ a Fra\"iss\'e precompact expansion of $\m F$ such that $\Age(\m F^{*})$ consists of rigid elements. Then, the following are equivalent: 

\begin{enumerate}

\item[i)] The pair $(\Age(\m F), \Age(\m F^{*}))$ has the relative Ramsey property for embeddings. 

\item[ii)] For every $k\in \N$, $\m A \in \Age(\m F)$, $\m B^{*} \in \Age(\m F^{*})$, there exists $\m C^{*} \in \Age(\m F^{*})$ such that for every coloring $c:\funct{\binom{\m C}{\m A}_{\Emb}}{[k]}$, there exists $b \in \binom{\m C}{\m B}_{\Emb}$ such that: $$ \forall a_{0}, a_{1} \in \binom{\m B}{\m A}_{\Emb} \quad a_{0}\cong_{\m B^{*}} a_{1} \ \Rightarrow \ (ba_{0}\cong_{\m C^{*}} ba_{1} \ \wedge \ c(ba_{0}) = c(ba_{1})).$$

\end{enumerate}

\end{prop}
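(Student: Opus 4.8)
The implication ii) $\Rightarrow$ i) is immediate and I would dispose of it first: given $\m C^{*}$ as in ii), its reduct $\m C = \restrict{\m C^{*}}{L}$ lies in $\Age(\m F)$ (reducts of members of $\Age(\m F^{*})$ embed in $\m F$), and for any coloring $c : \funct{\binom{\m C}{\m A}_{\Emb}}{[k]}$ the embedding $b \in \binom{\m C}{\m B}_{\Emb}$ supplied by ii) already satisfies $c(ba_{0}) = c(ba_{1})$ whenever $a_{0} \cong_{\m B^{*}} a_{1}$. Discarding the extra clause $ba_{0} \cong_{\m C^{*}} ba_{1}$ yields exactly the relative Ramsey property for embeddings (Definition \ref{defn:rRPEmb}). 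So I would concentrate on i) $\Rightarrow$ ii), whose guiding idea is to \emph{absorb} the new conclusion $ba_{0} \cong_{\m C^{*}} ba_{1}$ into the coloring by inflating the number of colors, using precompactness to keep this number finite.

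For the construction, fix $k$, $\m A \in \Age(\m F)$ and $\m B^{*} \in \Age(\m F^{*})$. By precompactness, $\m A$ admits only finitely many expansions in $\Age(\m F^{*})$; let $E$ denote this finite set of expansions (based on the universe of $\m A$) and set $N = |E|$. I would then apply the relative Ramsey property for embeddings (Definition \ref{defn:rRPEmb}) to the data $\m A, \m B^{*}$ \emph{with $kN$ colors in place of $k$}, obtaining $\m C \in \Age(\m F)$ that is relatively Ramsey for $kN$-colorings. Next I would fix any embedding $\iota : \m C \hookrightarrow \m F$ and let $\m C^{*}$ be the expansion of $\m C$ pulled back through $\iota$ from the substructure $\restrict{\m F^{*}}{\iota(\m C)}$; thus $\m C^{*} \in \Age(\m F^{*})$ and $\restrict{\m C^{*}}{L} = \m C$. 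The essential point is that $\m C^{*}$ is fixed \emph{before} any coloring is presented, which is what avoids the circularity that would otherwise arise from letting the expansion depend on $c$.

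Given an arbitrary coloring $c : \funct{\binom{\m C}{\m A}_{\Emb}}{[k]}$, I would introduce the auxiliary \emph{expansion coloring} $e : \funct{\binom{\m C}{\m A}_{\Emb}}{E}$ sending each $a$ to the canonical expansion $(a(\m A), \vec R^{a})$ it induces on $\m A$ from $\m C^{*}$. Applying the relative Ramsey property of $\m C$ to the product coloring $c \times e : \funct{\binom{\m C}{\m A}_{\Emb}}{[k] \times E}$ (at most $kN$ colors) produces $b \in \binom{\m C}{\m B}_{\Emb}$ with the property that $a_{0} \cong_{\m B^{*}} a_{1}$ implies $(c\times e)(ba_{0}) = (c\times e)(ba_{1})$. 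Reading off the first coordinate gives $c(ba_{0}) = c(ba_{1})$, while the second coordinate gives $e(ba_{0}) = e(ba_{1})$, i.e. the canonical expansions induced on $\m A$ by $ba_{0}$ and $ba_{1}$ from $\m C^{*}$ are equal, which is precisely $ba_{0} \cong_{\m C^{*}} ba_{1}$. This establishes ii).

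The step I expect to require the most care is the bookkeeping of the expansion coloring, rather than any hard combinatorics. Specifically, I must verify that every value $e(a)$ genuinely belongs to the finite set $E$ counted by $N$ (it does, since $(a(\m A), \vec R^{a})$ is isomorphic, via $a$ followed by $\iota$, to a finite substructure of $\m F^{*}$, hence is an expansion of $\m A$ in $\Age(\m F^{*})$), and that the fiber relation of $e$ coincides verbatim with $\cong_{\m C^{*}}$, so that monochromaticity for $e$ transcribes exactly into the clause $ba_{0} \cong_{\m C^{*}} ba_{1}$. I note that rigidity of the elements of $\Age(\m F^{*})$, although part of the standing hypotheses, is not actually invoked in this argument; as recorded in the discussion of canonical expansions, rigidity serves only to identify the $\cong$-classes with sets of the form $\binom{\m B^{*}}{\m A^{*}}_{\Emb}$.
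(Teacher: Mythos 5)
Your proposal is correct and follows essentially the same route as the paper: both directions match, and for i)\,$\Rightarrow$\,ii) the paper uses exactly the same trick of enriching the coloring by the $\cong_{\m C^{*}}$-class (your coloring $c\times e$ is the paper's $\bar c(a)=(c(a),[a])$), applied with $k|E(\m A)|$ colors to a $\m C$ obtained in advance and equipped with an arbitrary expansion $\m C^{*}$. Your side remark that rigidity is not actually used in the argument is also consistent with the paper's proof.
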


\begin{proof}

Quite clearly, item $ii)$ implies the relative Ramsey property for embeddings. For the converse, use the standard trick of enriching the coloring $c$ by the $\cong_{\m C^{*}}$-isomorphism type. Formally, fix $k\in \N$, $\m A \in \Age(\m F)$, $\m B^{*} \in \Age(\m F^{*})$. Recall that $E(\m A)$ denotes the set of all (possibly isomorphic) expansions of $\m A$ in $\Age(\m F^{*})$. Consider $\m C$ provided by the relative Ramsey property for embeddings applied to $\m A$, $\m B^{*}$ and $k|E(\m A)|$. Let $\m C^{*}$ denote any expansion of $\m C$ in $\Age(\m F^{*})$. Let $c:\funct{\binom{\m C}{\m A}_{\Emb}}{[k]}$ and define $\bar  c$ by $\bar c(a) = (c(a), [a])$, where $[a]$ denotes the $\cong_{\m C^{*}}$ equivalence class of $a$. This is a $k|E(\m A)|$-coloring of $\binom{\m C}{\m A}_{\Emb}$, so we can find $b\in \binom{\m C}{\m B}_{\Emb}$ so that $$ \forall a_{0}, a_{1} \in \binom{\m B}{\m A}_{\Emb} \quad a_{0}\cong_{\m B^{*}} a_{1} \ \Rightarrow \ \bar c(ba_{0}) = \bar c(ba_{1}).$$

We are done since $$ \bar c(ba_{0}) = \bar c(ba_{1})   \Leftrightarrow \ (ba_{0}\cong_{\m C^{*}} ba_{1} \ \wedge \ c(ba_{0}) = c(ba_{1})). \qquad \qedhere$$

\end{proof}


We now turn to a consequence of the relative Ramsey property for embeddings. Let $\m A \in \Age(\m F)$ and $\m B^{*}\in \Age(\m F^{*})$. Recall that the set $\binom{\m B}{\m A}_{\Emb}$ is partitioned into $\cong_{\m B*}$-equivalence classes, which correspond to those sets of the form $\binom{\m B^{*}}{\m A^{*}}_{\Emb}$, where $\m A^{*}$ ranges over the set of all (possibly isomorphic) expansions of $\m A$ in $\Age(\m F^{*})$ that are based on $A$. As a direct consequence:

\begin{prop}

Let $\m F$ be a Fra\"iss\'e structure and $\m F^{*}$ a Fra\"iss\'e precompact expansion of $\m F$ such that $\Age(\m F^{*})$ consists of rigid elements. Assume that the pair $(\Age(\m F), \Age(\m F^{*}))$ has the relative Ramsey property for embeddings. Then, for every $k\in \N$, $\m A^{*}, \m B^{*} \in \Age(\m F^{*})$, there exists $\m C \in \Age(\m F)$ such that for every coloring $c:\funct{\binom{\m C}{\m A}_{\Emb}}{[k]}$, there exists $b \in \binom{\m C}{\m B}_{\Emb}$ such that the set $b \circ \binom{\m B^{*}}{\m A^{*}}_{\Emb} := \{ ba^{*} : a^{*} \in \binom{\m B^{*}}{\m A^{*}}_{\Emb} \}$ is monochromatic. 




\end{prop}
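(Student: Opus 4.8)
The plan is to derive this statement as an immediate specialization of the relative Ramsey property for embeddings, once the sets $\binom{\m B^{*}}{\m A^{*}}_{\Emb}$ are recognized as living inside single $\cong_{\m B^{*}}$-equivalence classes. First I would set $\m A := \restrict{\m A^{*}}{L}$ and $\m B := \restrict{\m B^{*}}{L}$, so that $\m A \in \Age(\m F)$ and $\m B^{*} \in \Age(\m F^{*})$. Applying the relative Ramsey property for embeddings to the integer $k$, the structure $\m A$ and the expansion $\m B^{*}$ produces a structure $\m C \in \Age(\m F)$, and this is the $\m C$ the statement asks for.

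Next, given a coloring $c : \funct{\binom{\m C}{\m A}_{\Emb}}{[k]}$, the relative Ramsey property for embeddings hands me an embedding $b \in \binom{\m C}{\m B}_{\Emb}$ with the property that $c(ba_{0}) = c(ba_{1})$ whenever $a_{0}, a_{1} \in \binom{\m B}{\m A}_{\Emb}$ satisfy $a_{0} \cong_{\m B^{*}} a_{1}$. The crux is then to check that, viewed through reducts inside $\binom{\m B}{\m A}_{\Emb}$, the whole set $\binom{\m B^{*}}{\m A^{*}}_{\Emb}$ sits inside a single $\cong_{\m B^{*}}$-class. Indeed, any $L^{*}$-embedding $a^{*} : \m A^{*} \to \m B^{*}$ has underlying $L$-embedding $a : \m A \to \m B$ whose canonical expansion $\vec R^{a}$ satisfies, for every $i \in I^{*}$ and every tuple $\bar x$, the chain $R^{a}_{i}(\bar x) \Leftrightarrow R^{\m B^{*}}_{i}(a(\bar x)) \Leftrightarrow R^{\m A^{*}}_{i}(\bar x)$, using that $a^{*}$ preserves each $R_{i}$; hence $\vec R^{a} = \vec R^{\m A^{*}}$ for every element of $\binom{\m B^{*}}{\m A^{*}}_{\Emb}$, so any two such embeddings are $\cong_{\m B^{*}}$-equivalent. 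This is exactly the identification recorded earlier, where rigidity of the elements of $\Age(\m F^{*})$ guarantees that the $\cong_{\m B^{*}}$-classes are precisely the sets $\binom{\m B^{*}}{\m A^{*}}_{\Emb}$.

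Combining the two steps finishes the argument: for $a_{0}, a_{1} \in \binom{\m B^{*}}{\m A^{*}}_{\Emb}$ we have $a_{0} \cong_{\m B^{*}} a_{1}$, so $c(ba_{0}) = c(ba_{1})$, which is to say that $b \circ \binom{\m B^{*}}{\m A^{*}}_{\Emb}$ is monochromatic. I do not expect a genuine obstacle here; the only point requiring care is the bookkeeping in the previous paragraph, namely that membership in $\binom{\m B^{*}}{\m A^{*}}_{\Emb}$ forces the canonical expansion induced on $\m A$ to equal $\m A^{*}$ on the nose (not merely up to isomorphism). This is where the rigidity hypothesis is implicitly used and what makes the $\cong_{\m B^{*}}$-class description exact, but note that even the weaker containment of $\binom{\m B^{*}}{\m A^{*}}_{\Emb}$ in one class already suffices for the conclusion.
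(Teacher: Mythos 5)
Your proposal is correct and follows exactly the route the paper takes: the paper states this proposition as a ``direct consequence'' of the preceding observation that the $\cong_{\m B^{*}}$-classes in $\binom{\m B}{\m A}_{\Emb}$ are precisely the sets $\binom{\m B^{*}}{\m A^{*}}_{\Emb}$, and your argument simply spells out that identification (in fact only the containment direction, which as you note needs no rigidity) before invoking the relative Ramsey property for embeddings with the same parameters $k$, $\m A$, $\m B^{*}$.
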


Quite surprisingly, the converse to the previous proposition does not seem to hold. The main obstruction is that we only have a limited control on how $b$ behaves with respect to canonical expansions. As we have seen before, we can make sure that $b$ preserves $\cong _{\m B^{*}}$-equivalence. However, we cannot make sure that it preserves canonical expansions. This detail appears to be problematic when trying to deduce the relative Ramsey property for embeddings from a repeated application of item $ii)$.

\section{The relative Ramsey property for structures}

\label{section:RPstruct}

Let $\K$ be a class of $L$-structures and $\K^{*}$ an expansion of $\K$ in $L^{*}$. Recall that the pair $(\K, \K^*)$ has the \emph{relative Ramsey property for structures} when for every $k\in \N$, $\m A \in \K$ and $\m B^{*} \in \K^*$, there exists $\m C \in \K$ such that for every coloring $c:\funct{\binom{\m C}{\m A}}{[k]}$, there exists $b \in  \binom{\m C}{\m B}_{\Emb}$ such that: $$ \forall \mc A_{0}, \mc A _{1}\in \binom{\m B}{\m A} \ \left(\mc A_{0}\cong_{\m B^{*}} \mc A_{1} \ \Rightarrow \ c(b(\mc A_{0})) = c(b(\mc A_{1}))\right),$$ where $\mc A_{0}\cong_{\m B^{*}} \mc A_{1}$ means that $\restrict{\m B^{*}}{\tilde A_{0}} \cong \restrict{\m B^{*}}{\tilde A_{1}}$.

Again, when $\K$ and $\K^{*}$ are of the form $\Age(\m F)$ and $\Age(\m F^{*})$ respectively, where $\m F$ and $\m F^{*}$ are Fra\"iss\'e structures and $\m F^{*}$ is an expansion of $\m F$, a compactness argument shows that $(\Age(\m F), \Age(\m F^{*}))$ has the relative Ramsey property for structures when for every $k\in \N$, $\m A \in \Age(\m F)$, $\m B^{*} \in \Age(\m F^{*})$ and $c:\funct{\binom{\m F}{\m A}}{[k]}$, there exists $g\in G$ such that: $$ \forall \mc A_{0}, \mc A _{1}\in \binom{\m B}{\m A} \ \left(\mc A_{0}\cong_{\m B^{*}} \mc A_{1} \ \Rightarrow \ c(g(\mc A_{0})) = c(g(\mc A_{1}))\right).$$

This property implies (but does not seem to be equivalent to) the following weakening of the Ramsey property for $\Age(\m F^{*})$: for every $k\in \N$, $\m A^{*}, \m B^{*} \in \Age(\m F^{*})$ and $c:\funct{\binom{\m F}{\m A}}{[k]}$, there exists $g\in G$ such that $g\circ\binom{\m B^{*}}{\m A^{*}}$ is monochromatic.

Note that colorings of structures can be seen as particular cases of colorings of embeddings, where elements with isomorphic (and not necessarily equal) canonical expansions receive the same color. For that reason, the relative Ramsey property for embeddings implies the relative Ramsey property for structures.  The converse does not seem to hold in general. The only instance for which we could check that the two notions agree is when the elements of $\Age(\m F)$ are rigid, simply because the sets $\binom{\m F}{\m A}$ and $\binom{\m F}{\m A}_{\Emb}$ can be identified. However, from the practical point of view, the following results show that the relative Ramsey property for structures may have some applications in the future.

\begin{prop}

\label{prop:relRPsub}

Assume that the pair $(\mathcal K, \mathcal K^{*})$ has the relative Ramsey property for structures, then so does the pair $(\mathcal K, \mathcal K^{**})$ whenever $\mathcal K^{**}\subset \mathcal K^{*}$.   
\end{prop}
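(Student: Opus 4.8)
The plan is to observe that the relative Ramsey property for structures is inherited by subclasses of the expansion class for exactly the same reason as the embedding version recorded at the start of Section~\ref{section:conseq}: its definition quantifies over one expanded structure $\m B^{*}$ at a time, and the equivalence relation $\cong_{\m B^{*}}$ that enters the conclusion depends only on that fixed structure and not on the ambient class containing it. Consequently the argument should be direct from the definition, with no genuine Ramsey-theoretic content, and the real task is merely to make the bookkeeping explicit.

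Concretely, to verify the property for $(\mathcal K, \mathcal K^{**})$ I would fix arbitrary $k\in\N$, $\m A \in \mathcal K$ and $\m B^{**} \in \mathcal K^{**}$. Since $\mathcal K^{**}\subset \mathcal K^{*}$, the structure $\m B^{**}$ is itself an element of $\mathcal K^{*}$, so I may feed the triple $(k, \m A, \m B^{**})$ into the relative Ramsey property for structures of the pair $(\mathcal K, \mathcal K^{*})$. This produces some $\m C \in \mathcal K$ such that every coloring $c:\funct{\binom{\m C}{\m A}}{[k]}$ admits a witness $b\in\binom{\m C}{\m B}_{\Emb}$, where $\m B = \restrict{\m B^{**}}{L}$, satisfying $c(b(\mc A_{0}))=c(b(\mc A_{1}))$ whenever $\mc A_{0}\cong_{\m B^{**}}\mc A_{1}$.

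The remaining step is to note that this $\m C$ is exactly the structure demanded by the definition of the relative Ramsey property for structures for $(\mathcal K, \mathcal K^{**})$ applied to the same data $k$, $\m A$, $\m B^{**}$: the conclusion just obtained is verbatim the required one, because $\mc A_{0}\cong_{\m B^{**}}\mc A_{1}$ abbreviates $\restrict{\m B^{**}}{\tilde A_{0}}\cong\restrict{\m B^{**}}{\tilde A_{1}}$, a statement about the single fixed expansion $\m B^{**}$ that is insensitive to whether we regard $\m B^{**}$ as lying in $\mathcal K^{*}$ or in $\mathcal K^{**}$.

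I do not anticipate any real obstacle: the only point requiring care is that the $\cong$-relation is read off from $\m B^{**}$ alone, so that passing from $\mathcal K^{*}$ to the smaller class $\mathcal K^{**}$ merely discards instances one would otherwise have to check rather than imposing new ones. This is the same (at first sight surprising) stability under subclasses already noted for the embedding version, and here too the verification is immediate.
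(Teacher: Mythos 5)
Your proposal is correct and coincides with the paper's argument, which simply records the proof as ``direct from the definition'': since $\m B^{**}\in\mathcal K^{**}\subset\mathcal K^{*}$, the witness $\m C$ supplied by the pair $(\mathcal K,\mathcal K^{*})$ works verbatim for $(\mathcal K,\mathcal K^{**})$, as the relation $\cong_{\m B^{**}}$ depends only on the fixed structure $\m B^{**}$. Your write-up just makes this bookkeeping explicit.
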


\begin{proof}
Direct from the definition. 
\end{proof}

\begin{thmm}

Let $\m F$ be a Fra\"iss\'e structure and $\m F^{*}$ a precompact expansion of $\m F$ whose age consists of rigid structures. Assume that the pair $(\Age(\m F), \Age(\m F^{*}))$ has the relative Ramsey property for structures and that $\Age(\m F^{*})$ has the expansion property relative to $\Age(\m F)$. Then $\Age(\m F^{*})$ has the Ramsey property. 

\end{thmm}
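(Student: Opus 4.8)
The plan is to establish the compactness reformulation of the Ramsey property for $\Age(\m F^{*})$: it suffices to prove that $\m F^{*}\arrows{(\m B^{*})}{\m A^{*}}{k}$ for every $k\in\N$ and all $\m A^{*},\m B^{*}\in\Age(\m F^{*})$. Fix these and put $\m A=\restrict{\m A^{*}}{L}$ and $\m B=\restrict{\m B^{*}}{L}$. Given a $k$-colouring $\gamma$ of $\binom{\m F^{*}}{\m A^{*}}$, I would push it down to a colouring $c:\funct{\binom{\m F}{\m A}}{[k]}$ of copies of the reduct, by setting $c(\mc A)=\gamma(\mc A)$ whenever $\restrict{\m F^{*}}{\mc A}\cong\m A^{*}$ (so that $\mc A$ really is a copy of $\m A^{*}$) and choosing $c(\mc A)$ arbitrarily otherwise. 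Passing to reducts is what makes the relative Ramsey property for structures applicable, while precompactness guarantees that only finitely many induced expansion types occur among copies of $\m A$.

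First I would feed $\m B$ into the expansion property, obtaining $\m E\in\Age(\m F)$ such that $\m B^{*}$ embeds into every expansion of $\m E$ in $\Age(\m F^{*})$; by precompactness there are only finitely many such expansions, say $\m E^{(1)},\dots,\m E^{(n)}$, based on the universe of $\m E$. The core of the argument is then to find a single $g\in G$ for which $c\circ g$ is constant on each $\cong_{\m E^{(i)}}$-class of $\binom{\m E}{\m A}$, simultaneously for all $i\le n$. Once this is available the theorem follows at once: the canonical expansion $g^{*}$ that $g$ induces on $\m E$ equals some $\m E^{(i_{0})}$; the expansion property places a copy $\mc B^{*}$ of $\m B^{*}$ inside $g^{*}=\m E^{(i_{0})}$; the copies of $\m A^{*}$ inside $\mc B^{*}$ all lie in a single $\cong_{\m E^{(i_{0})}}$-class, hence receive a single $c$-value; and since $g$ carries them to genuine copies of $\m A^{*}$ in $\m F^{*}$, where $c$ and $\gamma$ coincide, the copy $g(\mc B^{*})$ is $\gamma$-monochromatic on its copies of $\m A^{*}$, as required.

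The one substantial point is the simultaneous stabilisation above. A single use of the relative Ramsey property for structures neutralises the colouring on the $\cong_{\m E^{(i)}}$-classes for one \emph{prescribed} expansion $\m E^{(i)}$, but it returns a copy of $\m E$ whose own induced expansion $g^{*}$ we cannot prescribe and which is in general different from $\m E^{(i)}$; it is precisely this mismatch that forces us to stabilise $c$ against all of $\m E^{(1)},\dots,\m E^{(n)}$ at once. I would build the required $g$ by iterating the relative Ramsey property $n$ times along a tower $\m E=\m D_{0},\m D_{1},\dots,\m D_{n}$, treating one more expansion at each step and amalgamating the auxiliary expansions inside the Fra\"iss\'e class $\Age(\m F^{*})$. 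The hard part, which I expect to be the real obstacle, is to ensure that an expansion stabilised at an early stage stays stabilised under the embeddings introduced later: the reduct-embeddings delivered by the relative Ramsey property need not respect any prescribed expansion, so each application has to be run on the colouring enriched by its induced-expansion type (finitely many values, by precompactness), exactly in the spirit of the canonical-expansion refinements of Section \ref{section:conseq}, which is where rigidity of $\Age(\m F^{*})$ enters, so that the embeddings in the tower can be taken to preserve induced types and the transport of the partitions goes through.
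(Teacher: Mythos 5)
You have the right global shape --- push the colouring down to the reduct, use the expansion property to manufacture a large structure $\m E$ every expansion of which contains $\m B^{*}$, stabilise the colouring on a copy of $\m E$, and read off a monochromatic copy of $\m B^{*}$ inside the expansion that this copy inherits from $\m F^{*}$ --- and you have correctly located the crux: one application of the relative Ramsey property for structures stabilises $c$ with respect to a \emph{prescribed} expansion $\m E^{(i)}$, while the embedding it returns induces an expansion you cannot prescribe. But the step you propose to bridge this, the ``simultaneous stabilisation'' of $c$ with respect to all of $\m E^{(1)},\dots,\m E^{(n)}$ at once, is exactly where the proof is missing, and the tower you sketch does not deliver it: at each stage the relative Ramsey property hands back an embedding of \emph{reducts} over which you have no control as to how it interacts with the expansions fixed at earlier stages, so the partitions stabilised earlier are not carried forward; and enriching the colouring by induced types only constrains the new partition to be \emph{coarser} than the old one, which is the wrong direction for transporting constancy. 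You flag this yourself as ``the real obstacle'' but do not resolve it, so as it stands the argument has a genuine gap at its central step.

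The paper sidesteps the issue entirely. A \emph{single} application of the relative Ramsey property for structures (with an arbitrary expansion $\m B^{*}$ of $\m B$) already shows that $c$ takes at most $t(\m A)$ values on $\binom{b(\m B)}{\m A}$, where $t(\m A)$ is the (finite, by precompactness) number of non-isomorphic expansions of $\m A$ in $\Age(\m F^{*})$, because the $\cong_{\m B^{*}}$-classes of $\binom{\m B}{\m A}$ are indexed by induced expansion types. Hence every $\m A\in\Age(\m F)$ has Ramsey degree at most $t(\m A)$ in $\Age(\m F)$, and the theorem then follows from the known fact (\cite{NVT3}, Section 5, Proposition 8) that finite Ramsey degrees bounded by the number of expansions, together with the expansion property, imply the Ramsey property for the expansion class. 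If you want to complete your direct argument instead, the standard device hidden in that proposition is the one you need: apply the expansion property not only to $\m B$ but to a joint extension containing all $t(\m A)$ expansions of $\m A$ as well, so that \emph{every} expansion of $\m E$ realises all $t(\m A)$ types; then colour each copy of $\m A$ additionally by its $\m F^{*}$-induced type and apply the relative Ramsey property once, with one fixed $\m E^{(1)}$. Constancy of the type component forces the partition induced by the unknown expansion $g^{*}$ to be coarser than the $\cong_{\m E^{(1)}}$-partition, and since both partitions have exactly $t(\m A)$ classes they coincide --- giving constancy of $c\circ g$ on the one class you actually use, with no tower and no simultaneous stabilisation.
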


\begin{proof}

Because of the relative Ramsey property for structures, every $\m A \in \Age(\m F)$ has a finite Ramsey degree in $\Age(\m F)$ whose value is at most equal to the number of non-isomorphic expansions of $\m A$ in $\Age(\m F^{*})$. Together with the expansion property relative to $\Age(\m F)$, this is known to imply the Ramsey property for $\Age(\m F^{*})$ (for a reference, see for example \cite{NVT3}, Section 5, Proposition 8).  \qedhere

\end{proof}

The preceding result could turn out to be useful in practice, where the Ramsey property is often difficult to prove and the expansion property generally more accessible.

%
%
%
%
%
%

\begin{thmm}

Let $\m F$ be a Fra\" iss\'e structure and let $\m F^{*}$ be a precompact relational expansion of $\m F$. Assume that $\Age(\m F^{*})$ consists of rigid elements and that the pair $(\Age(\m F), \Age(\m F^{*}))$ has the relative Ramsey property for structures. Then $\Age(\m F^{*})$ admits a Fra\"iss\'e subclass with the Ramsey property and the expansion property relative to $\Age(\m F)$.  

\end{thmm}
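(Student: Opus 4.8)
The plan is to realize the desired subclass as the age of a point in a minimal subflow of $G\curvearrowright X^{*}$, and to obtain the Ramsey property from Theorem \ref{thm:relRPRP}. First I would record the reduction: it is enough to produce a subclass $\K^{**}\subseteq\Age(\m F^{*})$ that is hereditary, has the joint embedding property, whose class of reducts is all of $\Age(\m F)$, and which has the expansion property relative to $\Age(\m F)$. Indeed, by Proposition \ref{prop:relRPsub} the pair $(\Age(\m F),\K^{**})$ inherits the relative Ramsey property for structures; since $\K^{**}$ consists of rigid structures and has the expansion property, the argument of Theorem \ref{thm:relRPRP} (applied as discussed in the last paragraph) will give the Ramsey property for $\K^{**}$, and the Ramsey property together with heredity, joint embedding and rigidity will give amalgamation (see \cite{KPT}), so that $\K^{**}$ is automatically Fra\"iss\'e.

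To build $\K^{**}$, I would use that precompactness makes $X^{*}=\overline{G\cdot\vec R^{*}}$ a compact $G$-flow, so by a standard Zorn argument it contains a minimal subflow $Y$. Each point of $Y$ is an expansion $(\m F,\vec S)$ of $\m F$ with $\restrict{(\m F,\vec S)}{L}=\m F$. Minimality forces all points of $Y$ to share a common age: if $\vec S,\vec S'\in Y$ then $\vec S'\in\overline{G\cdot\vec S}$, and eventual agreement on finite initial segments gives $\Age(\m F,\vec S')\subseteq\Age(\m F,\vec S)$, hence equality by symmetry. Call this common age $\K^{**}$. Then $\K^{**}$ is an age, hence hereditary and with the joint embedding property, it consists of rigid structures, it is contained in $\Age(\m F^{*})$, and its reduct class is exactly $\Age(\m F)$; moreover it has the lifting property that every expansion of an $\m A\in\Age(\m F)$ occurring in $\K^{**}$ extends, along any embedding $\m A\le\m B$ with $\m B\in\Age(\m F)$, to an expansion of $\m B$ in $\K^{**}$, since ultrahomogeneity of $\m F$ lets one extend the relevant embedding into $\m F$ and read off a compatible expansion from $\vec S$.

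Next I would establish the expansion property for $\K^{**}$ directly from minimality, arguing by contradiction. If it failed for some $\m A$, then along a chain exhausting $\m F$ one could, after passing to a subsequence realizing a fixed expansion type $\m A^{*}_{0}$ of $\m A$ (there being only finitely many), find expansions $\m B^{*}_{n}=\restrict{(\m F,\vec S_{n})}{[m_{n}]}$ with $m_{n}\to\infty$ and $\m A^{*}_{0}\not\le\m B^{*}_{n}$. Compactness of $Y$ yields a limit $\vec T\in Y$, and eventual agreement on finite sets shows $\m A^{*}_{0}\notin\Age(\m F,\vec T)$: an embedding of $\m A^{*}_{0}$ with image in some $[m]$ would give $\m A^{*}_{0}\le\restrict{(\m F,\vec T)}{[m]}=\restrict{(\m F,\vec S_{n})}{[m]}\le\m B^{*}_{n}$ for large $n$, a contradiction. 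But $\Age(\m F,\vec T)=\K^{**}\ni\m A^{*}_{0}$ by minimality, a contradiction. This supplies the expansion property, and the reduction of the first paragraph then finishes the proof.

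The hard part, requiring the most care, is the apparent circularity in the first paragraph: to invoke Theorem \ref{thm:relRPRP} one wants $\K^{**}$ to be Fra\"iss\'e (in particular to have amalgamation), yet amalgamation is obtained only afterwards from the Ramsey property. I expect to resolve this by appealing not to the statement of Theorem \ref{thm:relRPRP} but to its combinatorial core (\cite{NVT3}, Section 5, Proposition 8): the inherited relative Ramsey property for structures of $(\Age(\m F),\K^{**})$ bounds the Ramsey degree in $\Age(\m F)$ of each $\m A$ by the number of its expansions in $\K^{**}$, and this bound together with the expansion property forces the Ramsey property for $\K^{**}$ — an implication that only requires the \emph{reduct} class $\Age(\m F)$ to be Fra\"iss\'e, which it is. Amalgamation for $\K^{**}$, and hence its Fra\"iss\'e-ness, is then deduced a posteriori from the Ramsey property.
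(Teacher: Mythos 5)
Your proposal is correct and follows essentially the same route as the paper: take the age of a point in a minimal subflow of $X^{*}$, derive the expansion property from minimality, inherit the relative Ramsey property for structures via Proposition \ref{prop:relRPsub} to bound Ramsey degrees and conclude the Ramsey property from \cite{NVT3} (Section 5, Proposition 8), and obtain amalgamation (hence Fra\"iss\'e-ness) only at the end from the Ramsey property, joint embedding and rigidity. The only difference is that you unpack into explicit compactness arguments the facts (common age of points in a minimal subflow, minimality implying the expansion property) that the paper delegates to citations of \cite{NVT3}, and you make explicit the non-circularity point that the paper handles implicitly by citing the version of the degree-plus-expansion argument that requires only heredity and joint embedding of the expansion class.
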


\begin{proof}

Let $\vec S \in \overline{G\cdot \vec R^{*}}$ be such that $G\curvearrowright\overline{G\cdot \vec S}$ is minimal. Note that $\Age(\m F, \vec S)\subset \Age(\m F^{*})$ because $\vec S \in  \overline{G\cdot \vec R^{*}}$ (cf \cite{NVT3}, section on minimality). We claim that $\Age(\m F, \vec S)$ is a required. This class clearly has the hereditarity property and the joint embedding property. The expansion property comes from minimality of $G\curvearrowright\overline{G\cdot \vec S}$ (cf \cite{NVT3}, remark following Theorem 4 in Section 4). To prove the Ramsey property, notice first that because $\Age(\m F, \vec S)\subset \Age(\m F^{*})$, the pair $(\Age(\m F), \Age(\m F, \vec S))$ also has the relative Ramsey property for structures (cf Proposition \ref{prop:relRPsub}), which in turn implies that every $\m A \in \Age(\m F)$ has a finite Ramsey degree at most equal to the number of non-isomorphic expansions of $\m A$ in $\Age(\m F, \vec S)$. Because $\Age(\m F, \vec S)$ has the hereditary property, the joint embedding property and the expansion property relative to $\Age(\m F)$, it has the Ramsey property (\cite{NVT3}, Section 5, Proposition 8). Finally, because of all the previous properties and because $\Age(\m F, \vec S)$ consists of rigid elements, it is Fra\"iss\'e (\cite{KPT} p.20, or \cite{NVT3} end of Section 6). \qedhere 

\end{proof}

Further investigation about the practical status of the relative Ramsey property for structures will decide on its value as a tool to derive the Ramsey property. We have to admit that, so far, we are not aware of any concrete application of any of the preceding results of this section.

\bibliographystyle{amsalpha}
\bibliography{bib}

\providecommand{\bysame}{\leavevmode\hbox to3em{\hrulefill}\thinspace}
\providecommand{\MR}{\relax\ifhmode\unskip\space\fi MR }
\providecommand{\MRhref}[2]{%
  \href{http://www.ams.org/mathscinet-getitem?mr=#1}{#2}
}
\providecommand{\href}[2]{#2}
\begin{thebibliography}{NVT11}

\bibitem[KPT05]{KPT}
A.~S. Kechris, V.~G. Pestov, and S.~Todorcevic, \emph{Fra\"\i ss\'e limits,
  {R}amsey theory, and topological dynamics of automorphism groups}, Geom.
  Funct. Anal. \textbf{15} (2005), no.~1, 106--189.

\bibitem[NVT10]{NVT1}
L.~Nguyen Van~Th{\'e}, \emph{Structural {R}amsey theory of metric spaces and
  topological dynamics of isometry groups}, Mem. Amer. Math. Soc. \textbf{206}
  (2010), no.~968, x+140.

\bibitem[NVT11]{NVT3}
\bysame, \emph{More on the {K}echris-{P}estov-{T}odorcevic correspondence:
  precompact expansions}, Fund. Math. (2011), to appear.

\bibitem[Pes02]{Pe0}
V.~G. Pestov, \emph{Ramsey-{M}ilman phenomenon, {U}rysohn metric spaces, and
  extremely amenable groups}, Israel J. Math. \textbf{127} (2002), 317--357.

\bibitem[Pes06]{Pe}
\bysame, \emph{Dynamics of infinite-dimensional groups}, University Lecture
  Series, vol.~40, American Mathematical Society, Providence, RI, 2006, The
  Ramsey-Dvoretzky-Milman phenomenon, Revised edition of {\it Dynamics of
  infinite-dimensional groups and Ramsey-type phenomena} [Inst. Mat. Pura. Apl.
  (IMPA), Rio de Janeiro, 2005].

\bibitem[Sok10]{So0}
M.~Soki\'c, \emph{Ramsey properties of finite posets and related structures},
  Ph.D. thesis, University of Toronto, 2010.

\bibitem[Sok11a]{So1}
\bysame, \emph{Ramsey properties of finite posets}, Order \textbf{29} (2011),
  no.~1, 1--30.

\bibitem[Sok11b]{So3}
\bysame, \emph{Ramsey property, ultrametric spaces, finite posets, and
  universal minimal flows}, preprint, 2011.

\end{thebibliography}

\end{document}